\theoremstyle{plain}
\newtheorem{theorem}{Theorem}
\newtheorem{lemma}[theorem]{Lemma}
\theoremstyle{definition}
\numberwithin{equation}{section}
\newcommand{\keywords}[1]{\textbf{Keywords:} #1}
\newcommand{\eps}{\epsilon}
\newcommand{\uni}[1]{\mathrm{Uniform}[#1]}
\newcommand{\M}{\mathcal{M}}
\newcommand{\R}{\mathbb{R}}
\newcommand{\Z}{\mathbb{Z}}
\newcommand{\Uni}{\mathrm{Uniform}[0,1]}
\newcommand{\Int}{\mathbf{Int}^n}
\newcommand{\pr}[1]{\mathbb{P}\left [ #1 \right]}
\newcommand{\E}[1]{\mathbb{E}\left [ #1 \right ]}
\newcommand{\ind}[1]{\mathbf{1}_{\{#1\}}}
\title{Brownian motion as limit of the interchange process - a direct proof}
\author{Mustazee Rahman \and B\'alint Vir\'ag}
\date{}
\begin{document}

\maketitle

\begin{abstract}
\noindent We prove that the random empirical measure of appropriately rescaled particle trajectories
of the interchange process on path graphs converges weakly to the deterministic measure
of stationary Brownian motion on the unit interval. This is a law of large numbers type result for
particle trajectories of the interchange process. 

After the completion of this manuscript we learned
about a result of Durrett and Neuhauser that implies this result. \footnote{Although their proof contains an error which is not straightforward to fix (see Section \ref{BM}),
the rest of the argument is similar to ours.
For this reason, we will not publish this manuscript in a refereed journal.}
\end{abstract}

\indent \keywords{Interchange process, stationary Brownian motion, hydrodynamic limit}

\section{Introduction} \label{sec:intro}
Let $P_n$ denote the finite path of length $n$, which is the graph on the vertex set $\{1, \ldots, n\}$
with edges between every adjacent integers $i$ and $i+1$. Insert self-loops at the end points
$1$ and $n$ so that every vertex has degree 2. The interchange process on $P_n$ is defined as follows.
Initially, each vertex $i$ in $P_n$ has a particle on it with label $i$. The particles move at random
in continuous time; each of the $n+1$ edges has an independent Poisson process of rate $1/2$
that indicates the times the particles move. An edge \emph{fires} at time $t$ if there is a point on
its Poisson process at time $t$. Whenever an edge fires the two particles along its endpoints are swapped.
We stipulate that the swap occurs instantaneously at the time the edge fires, so if edge $\{i, i+i\}$ fires
at time $t$ then the particle at position $i$ at time $t$ is the particle that was at position $i+1$ just prior to time $t$.
We may assume that all the times along all the different Poisson processes are distinct so that no
two edges fire at the same time. Let $\Int_t(i)$ be the position of particle $i$ at time $t$ on $P_n$.
Then $i \to \Int_t(i)$ is a permutation of $\{1, \ldots, n\}$ and the interchange process is the
permutation valued process $\Int = (\Int_t; t \geq 0)$.

The convention that particles jump instantaneously as edges fire implies that the trajectory
of every particle is a c\`{a}dl\`{a}g path (right continuous with left limits). Having the
edges fire at rate $1/2$ implies that each particle moves according to a rate $1$ simple
random walk on $P_n$. Let
\begin{equation} \label{eqn:trajectory} T^n_i(t) = \frac{\Int_{n^2t}(i)}{n} \quad \text{for}\;\; t \geq 0 \end{equation}
be the rescaled trajectory of particle $i$. Consider the random empirical measure
\begin{equation} \label{eqn:nu}\nu^n = \frac{1}{n} \sum_i \delta_{T^n_i}.\end{equation}

The measure $\nu^n$ is a random Borel probability measure on $D(\R_{+})$,
the space of c\'{a}dl\'{a}g paths from $\R_{+} = [0,\infty)$ into $[0,1]$ with the Skorokhod $J_1$-topology.
Any weak limit of $\nu^n$ is a priori a random measure on $D(\R_{+})$.
We prove that $\nu^n$ converges weakly to a deterministic measure: the law of stationary
Brownian motion on $[0,1]$. This is the law of standard Brownian motion started from a uniform
random point in $[0,1]$ and reflected off of the lines $y = 0$ and $y=1$.

Let us remark that any weak limit point of $\nu^n$ is supported on c\'{a}dl\'{a}g process
$X = (X(t); t \geq 0)$ with the property that for every $t$ the law of $X(t) \sim \Uni$. This follows
from observing that for every $t$ the law of $(1/n) \sum_i \delta_{T^n_i(t)}$ is uniform
on the set $\{i/n; 1 \leq i \leq n\}$ because $\Int_{n^2t}$ is a permutation. Such processes,
called \emph{permuton processes}, arise within the limit theory of permutation-valued
processes \cite{RVV}.

\begin{theorem} \label{thm:interchangeprocess}
Consider the interchange process $\Int$ on a path of length $n$ and let $\nu^n$
from (\ref{eqn:nu}) be the empirical measure of the particle trajectories.
Then $\nu^n$ converges weakly -- as a random probability measure on $D(\R_{+})$ --
to the deterministic measure concentrated on the law of stationary Brownian motion on $[0,1]$.
\end{theorem}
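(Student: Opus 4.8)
The plan is to reduce the convergence of the random measure $\nu^n$ to the control of the first two moments of its linear statistics. Since the claimed limit is deterministic, it suffices to exhibit a convergence-determining family of bounded continuous $F:D(\R_+)\to\R$ for which $\langle\nu^n,F\rangle\to\langle\mu,F\rangle$ in probability, where $\mu$ denotes the law of stationary reflected Brownian motion on $[0,1]$; and for a deterministic target this in turn follows from $\E{\langle\nu^n,F\rangle}\to\langle\mu,F\rangle$ together with $\mathrm{Var}(\langle\nu^n,F\rangle)\to0$. Tightness of $\{\nu^n\}$ as random elements of the space of probability measures on $D(\R_+)$ reduces, by Prokhorov's theorem, to tightness of the intensity measures $\E{\nu^n}=\tfrac1n\sum_i\mathrm{Law}(T^n_i)$ on $D(\R_+)$, which I would obtain from a single Aldous-type modulus estimate applied uniformly in $i$. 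The first moment is the statement that $\E{\langle\nu^n,F\rangle}=\tfrac1n\sum_i\E{F(T^n_i)}\to\langle\mu,F\rangle$. Here each tagged trajectory $T^n_i$ is a diffusively rescaled rate-$1$ random walk on $P_n$ with reflecting (Neumann) boundary behaviour -- the self-loops make the endpoints reflecting, the slight holding they induce being a vanishing boundary effect -- so a functional central limit theorem gives $T^n_i\Rightarrow$ reflected Brownian motion started at $\lim i/n$; averaging the starting point over $\{i/n\}$, which is asymptotically $\Uni$, produces exactly stationary reflected Brownian motion.

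The substance of the argument is the vanishing of the variance, equivalently the asymptotic independence of two tagged particles. Fix labels $a,b$ and write $X_t=\Int_t(a)$, $Y_t=\Int_t(b)$. My first observation is that the unordered pair $\{X_t,Y_t\}$ evolves exactly as a two-particle symmetric exclusion process: a firing of the edge between two adjacent tagged particles swaps their labels but leaves the unordered pair unchanged, so the label information is invisible to the set. Writing $L_t=\min(X_t,Y_t)$ and $U_t=\max(X_t,Y_t)$ for the sorted pair, a direct computation of the generator shows that the gap $G_t=U_t-L_t$ and the sum $U_t+L_t$ move as two independent rate-$2$ random walks in the bulk, with $G_t$ reflected at $1$ and only a negligible slowdown of the sum at $G_t=1$. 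Diffusively rescaled, the sorted pair therefore converges to a pair of reflected Brownian motions reflected off one another, that is, to $\bigl(\tfrac{S-G}{2},\tfrac{S+G}{2}\bigr)$ with $S$ a Brownian motion and $G$ an independent reflected Brownian motion -- precisely the law of the sorted pair of two independent reflected Brownian motions.

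It remains to recover the labels. Because two tagged particles can only exchange their order through a swap, and a swap occurs precisely when they are adjacent and the edge between them fires, the assignment of label $a$ to the lower or the upper element of the sorted pair is constant between swaps and flips at each swap; thus $X_t=L_t$ or $X_t=U_t$ according to the parity of the number of swaps up to time $t$. Conditioned on the sorted trajectory, the swaps during an adjacency interval form a rate-$1/2$ Poisson process competing against the rate-$1$ escape of the gap, so each such interval flips the parity with a probability bounded away from $0$ and $1$. The key point is that each macroscopic contact of the limiting sorted pair is accompanied by order $n$ microscopic adjacency intervals, over which the accumulated parity converges in total variation to a fair coin, while on any macroscopic excursion of $G$ away from $0$ the microscopic gap exceeds $1$ and the label does not flip. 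Hence in the limit the relative label is constant on excursions of $\overline B-\underline B$ and resampled uniformly and independently at each contact -- which is exactly the description, by the exchangeability of $\beta^1,\beta^2$, of two \emph{independent} reflected Brownian motions in terms of their sorted pair. Consequently $(T^n_i,T^n_j)$ converges to two independent reflected Brownian motions started at $\lim i/n$ and $\lim j/n$, so $\E{F(T^n_i)G(T^n_j)}\to\langle\mu,F\rangle\langle\mu,G\rangle$ for separated starting points. Averaging over $i,j$, with the coincident and near-diagonal pairs negligible since $F,G$ are bounded and form a set of vanishing density, gives $\E{\langle\nu^n,F\rangle^2}\to\langle\mu,F\rangle^2$, hence $\mathrm{Var}(\langle\nu^n,F\rangle)\to0$, and combining with the first-moment limit completes the proof.

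I expect the main obstacle to be the parity-randomization step: making rigorous that the discrete relative label, which flips through a biased coin on each of order-$n$ microscopic adjacency intervals, decouples in the limit into one genuinely fair sign per macroscopic excursion of the difference, and that these signs are independent across excursions and independent of the sorted limit. This requires combining an excursion-theoretic description of the contacts of the limiting sorted pair with a quantitative (geometric) convergence-to-uniform estimate for the accumulated parity, carried out jointly with the invariance principle for the sorted two-particle process and uniformly enough in the starting points $i,j$ to survive the averaging.
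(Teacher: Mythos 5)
Your outer reduction is sound and essentially parallels the paper: tightness of the random measures via a uniform modulus estimate on a single tagged particle, identification of the mean $\E{\langle\nu^n,F\rangle}$ via a one-particle invariance principle with reflecting boundary (the paper does this by lifting to $\Z$ through a covering map $\pi_n$ and applying Donsker), and reduction of the whole theorem to $\mathrm{Var}(\langle\nu^n,F\rangle)\to 0$, i.e.\ to asymptotic independence of two tagged particles. Where you diverge is in how you prove that independence, and this is where the proposal has a genuine gap. You aim for a full two-particle invariance principle: convergence of the sorted pair to the sorted pair of two independent reflected Brownian motions, followed by recovery of the labels through the parity of the number of swaps, arguing that each macroscopic contact of the limiting gap process carries order-$n$ microscopic adjacency intervals whose accumulated parity randomizes to a fair coin, independently across excursions and independently of the sorted paths. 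That final step --- which you yourself flag as the main obstacle --- is not a routine verification: the zero set of the limiting gap is a perfect set with no first excursion, so ``one fair sign per excursion'' has to be formulated through excursion theory and proved jointly with the invariance principle, with independence from the sorted pair and uniformity in the starting points $i,j$ to survive the averaging. As written, the hardest part of the theorem is deferred rather than proved.

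The paper avoids identifying the two-particle limit altogether. It constructs a third trajectory $S_3$ with the law of particle $j$, \emph{independent} of $S_1$, by letting $S_3$ copy the increments of $S_2$ except during the adjacency intervals $\{|S_1-S_2|=1\}$, where $S_3$ moves by fresh Poisson clocks. The difference $S_3-S_2$ is then a martingale whose quadratic variation is controlled by the number $J$ of adjacency intervals up to time $T$; a local-time estimate for the gap walk gives $\E{J}=O(\sqrt{T})$, hence $\E{|S_3(T)-S_2(T)|^2}=O(\sqrt{T})$ and, after diffusive rescaling, $\sup_{t\le T}|T_2(t)-T_3(t)|\le n^{-1/4}$ with probability $1-O(\sqrt{T/n})$. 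A soft lemma (two samples from the random measure have the same joint law as samples from two independent copies, hence the measure is deterministic) then closes the argument. This is exactly the quantitative content your parity heuristic is trying to capture --- the adjacency intervals are too few, $O(n\sqrt{T})$ of them in time $n^2T$, each contributing an $O(1)$ martingale increment, for the conditional dependence on particle $i$ to displace particle $j$ by more than $o(n)$ --- but packaged as an $L^2$ coupling bound rather than as a limit identification. If you want to complete your route you must supply the excursion-sign argument in full; otherwise I would recommend replacing it with a coupling of this type.
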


\subsection {Background and Motivation}\label{BM}
Theorem \ref{thm:interchangeprocess} is a law of large numbers phenomenon
for particle trajectories of the interchange process. Several results of this nature
exist in the literature. Durrett and Neuhauser \cite{DN} consider the interchange process
dynamics with finitely many coloured particles. Kipnis and Varadhan \cite{KV}, and
later Rezakhanlou \cite{Reza}, prove such results for the symmetric simple exclusion process
in the context of the hydrodynamic limit.

The main result of Durrett and Neuhauser actually implies Theorem \ref{thm:interchangeprocess}
but unfortunately we were not aware of their work until after the completion of this manuscript.
We should mention that there is an erroneous step in their proof. It is claimed that a simple
random walk on $\Z$ that runs at rate $\eps^{-2}$ has $\eps t^{1/2}$ returns to the origin within time $t$
in expectation (see \cite[equation (2.1)]{DN}). However, the correct order is $\eps^{-1} \, t^{1/2}$.
Our proof essentially presents the
correct argument. Our motivation for this work was to have a direct reference to Theorem
\ref{thm:interchangeprocess} which we use in recent work on permutation limits \cite{RVV}
and is also used by Kotowski and Vir\'{a}g \cite{KV16} to prove a large deviation result for
the interchange process with very asymmetric rates. We will not publish this manuscript
to a refereed journal but will use it as a reference for the aforementioned works.

The main ingredient in the proof of Theorem \ref{thm:interchangeprocess}
is showing that particle trajectories of the interchange process become asymptotically
independent (see Lemma \ref{lem:concentration}). For this we use coupling techniques.
In Section \ref{sec:hydrodynamic} we explain how Theorem \ref{thm:interchangeprocess}
also provides the hydrodynamic limit of the symmetric simple exclusion process.

\section{Tightness and Concentration Criterion} \label{sec:ccc}

The following lemma provides a criterion for $\nu^n$ to be tight.
Let $\M(K)$ denote the space of Borel probability measures on a metric space $K$ in the weak topology.
For a function $f : [0,T] \to [0,1]$ its c\'{a}dl\'{a}g modulus of continuity is
$$m_{f}(\delta) = \inf_{\Pi} \,\max_i \,\sup_{s,t \,\in [t_{i-1},t_i)} |f(t)-f(s)|,$$
where the infimum is over all finite partitions $\Pi = \{0 = t_0 < t_1 < \cdots < t_k = T\}$
satisfying $\min_i |t_i - t_{i-1}| \geq \delta$. A function $f$ is c\'{a}dl\'{a}g if and only if
$m_f(\delta) \to 0$ as $\delta \to 0$.

\begin{lemma} \label{lem:compactcriterion}
Let $f^n_1, \ldots, f^n_n$ be a collection of random paths in $D(\R_{+})$.
Consider the empirical measure of these paths: $\mu^n = \frac{1}{n} \sum_{i=1}^n \delta_{f^n_i}$.
Let $m^n_i(T,\delta)$ be the c\'{a}dl\'{a}g modulus of continuity of $f^n_i$ restricted to $[0,T]$. Suppose
for every $T < \infty$
\begin{equation} \label{eqn:modulusbound}
\limsup_{\delta \to 0} \, \limsup_{n \to \infty} \, \frac{1}{n} \sum_i \E{m^n_i(T, \delta)} = 0.
\end{equation}
Then the sequence of random measures $\{\mu^n\}$ is tight.
If particular, tightness follows if there is an $m : [0,1] \to \R_{+}$
that satisfies $\lim_{\delta \to 0} m(\delta) = m(0) = 0$ and
$$\limsup_{\delta \to 0} \, \limsup_{n \to \infty} \, \max_{i} \, \pr{ m^n_i(T, \delta) > m(\delta)} = 0.$$
\end{lemma}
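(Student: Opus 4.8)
The plan is to pass from the empirical measure $\mu^n$ to its mean (intensity) measure $\bar\mu^n \in \M(D(\R_{+}))$, defined by $\bar\mu^n(A) = \E{\mu^n(A)} = \frac{1}{n}\sum_i \pr{f^n_i \in A}$, and to reduce tightness of the random measures $\{\mu^n\}$ to the deterministic tightness of the sequence $\{\bar\mu^n\}$. This rests on a first-moment principle: if $\{\bar\mu^n\}$ is tight in $\M(D(\R_{+}))$, then so is $\{\mu^n\}$ as a sequence of random elements of $\M(D(\R_{+}))$. To prove the principle I would fix $\eps > 0$ and, using tightness of $\{\bar\mu^n\}$, choose for each integer $k \ge 1$ a compact set $C_k \subseteq D(\R_{+})$ with $\sup_n \bar\mu^n(D(\R_{+}) \setminus C_k) \le \eps\, 4^{-k}$. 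Setting
$$\mathcal{K} = \{\, \mu \in \M(D(\R_{+})) : \mu(D(\R_{+})\setminus C_k) \le 2^{-k} \text{ for every } k \ge 1 \,\},$$
Prokhorov's theorem shows $\mathcal{K}$ is relatively compact (it is uniformly tight), while the Portmanteau theorem shows it is closed (each defining inequality involves an open set, and $\mu \mapsto \mu(U)$ is lower semicontinuous for open $U$), so $\mathcal{K}$ is compact. A union bound together with Markov's inequality then gives $\pr{\mu^n \notin \mathcal{K}} \le \sum_k \pr{\mu^n(D(\R_{+})\setminus C_k) > 2^{-k}} \le \sum_k 2^k\, \bar\mu^n(D(\R_{+})\setminus C_k) \le \eps$, uniformly in $n$.

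It then remains to verify that $\{\bar\mu^n\}$ is tight, and this is where hypothesis (\ref{eqn:modulusbound}) enters through the Skorokhod ($J_1$) compactness criterion: since every path takes values in the bounded interval $[0,1]$, a subset of $D(\R_{+})$ is relatively compact precisely when, for each $T < \infty$, its c\`adl\`ag modulus $m_f(T,\delta)$ tends to $0$ uniformly over the set as $\delta \to 0$. I would therefore construct a relatively compact set as a countable intersection $C = \bigcap_{T,k} \{\, f : m_f(T, \delta_{T,k}) \le 1/k \,\}$ over positive integers $T$ and $k$, for suitably small thresholds $\delta_{T,k}$. For each level, Markov's inequality gives
$$\bar\mu^n\bigl( \{\, f : m_f(T,\delta) > 1/k \,\} \bigr) = \frac{1}{n}\sum_i \pr{m^n_i(T,\delta) > 1/k} \le k \cdot \frac{1}{n}\sum_i \E{m^n_i(T,\delta)},$$
and (\ref{eqn:modulusbound}) lets me pick $\delta_{T,k}$ so small that the $\limsup_{n\to\infty}$ of the right-hand side is at most $\eps\, 2^{-(T+k)}$; summing over $T,k$ controls $\limsup_n \bar\mu^n(D(\R_{+})\setminus C)$ by $\eps$.

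The one technical wrinkle is that (\ref{eqn:modulusbound}) controls only $\limsup_{n\to\infty}$ and not $\sup_n$, so the bound on $\bar\mu^n(D(\R_{+})\setminus C)$ is guaranteed only for $n$ beyond some level. This is resolved by the standard observation that a sequence of probability measures on a Polish space is tight as soon as it is tight for all but finitely many indices: each of the finitely many remaining $\bar\mu^n$ is individually tight (being a single Borel probability measure on the Polish space $D(\R_{+})$), a finite family of tight measures is tight, and one simply enlarges $C$ by a relatively compact set absorbing this finite family. After this adjustment $\sup_n \bar\mu^n(D(\R_{+})\setminus C) \le 2\eps$, and since $\eps$ is arbitrary this establishes tightness of $\{\bar\mu^n\}$, hence of $\{\mu^n\}$ via the first-moment principle.

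For the ``in particular'' clause I would deduce (\ref{eqn:modulusbound}) from the stated probability bound by a simple truncation: since $m^n_i(T,\delta) \le 1$ always, $\E{m^n_i(T,\delta)} \le m(\delta) + \pr{m^n_i(T,\delta) > m(\delta)}$, whence $\frac{1}{n}\sum_i \E{m^n_i(T,\delta)} \le m(\delta) + \max_i \pr{m^n_i(T,\delta) > m(\delta)}$; taking $\limsup_n$ and then $\limsup_{\delta\to 0}$ and using $m(\delta)\to 0$ yields (\ref{eqn:modulusbound}). I expect the main conceptual obstacle to be the first-moment reduction in the opening step --- producing a genuinely compact set $\mathcal{K}$ of measures from the intensity bound, balancing Prokhorov's criterion for relative compactness against lower semicontinuity for closedness --- rather than the modulus estimates, which are routine applications of Markov's inequality once the Skorokhod compactness criterion is invoked.
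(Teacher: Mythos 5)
Your overall architecture is the same as the paper's: pass to the intensity measure $\bar\mu^n(A)=\E{\mu^n(A)}$, build compact subsets of $\M(D(\R_{+}))$ out of compact sets of paths cut out by modulus conditions, and chain Markov's inequality with union bounds (the paper simply inlines your ``first-moment principle'' as a single computation and works one horizon $[0,T]$ at a time; your ``in particular'' clause is handled identically). The one genuine gap is your treatment of the passage from the iterated $\limsup_{\delta\to 0}\limsup_{n\to\infty}$ in (\ref{eqn:modulusbound}) to a bound uniform in $n$. Your compact set $C$ is an intersection of countably many constraints indexed by $(T,k)$, and the hypothesis only supplies, for each constraint separately, a threshold $N(T,k)$ beyond which the corresponding term is at most $\eps 2^{-(T+k)}$. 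Since there are infinitely many constraints, there need not be a single level beyond which $\bar\mu^n(C^c)\leq\eps$: for every fixed $n$, infinitely many pairs $(T,k)$ may still be ``bad,'' and you cannot interchange $\limsup_n$ with the infinite sum over $(T,k)$ without a summable dominating bound --- which is precisely the uniform-in-$n$ control you are missing. So the step ``the bound is guaranteed for $n$ beyond some level'' is unjustified, and the Ulam-type absorption of finitely many exceptional measures has no finite exceptional set to absorb.

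The missing ingredient is the observation with which the paper opens its proof: for fixed $n$ and $i$, the c\`adl\`ag modulus $m^n_i(T,\delta)$ is nondecreasing in $\delta$ and tends to $0$ as $\delta\to 0$, and since it is bounded by $1$ the same holds for $\E{m^n_i(T,\delta)}$. Monotonicity in $\delta$ upgrades (\ref{eqn:modulusbound}) to $\limsup_{\delta\to 0}\,\sup_n\,\frac{1}{n}\sum_i\E{m^n_i(T,\delta)}=0$: pick $\delta_0$ handling all $n\geq N$, shrink $\delta$ further to handle each of the finitely many $n<N$, and note that the bound for $n\geq N$ survives the shrinking by monotonicity. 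With the supremum version you can choose each $\delta_{T,k}$ so that the Markov bound holds for every $n$ simultaneously, and your argument then closes with no exceptional indices at all. Everything else in your proposal --- compactness and closedness of $\mathcal{K}$, the Skorokhod compactness criterion for uniformly bounded paths, and the truncation argument for the second statement --- is correct and matches the paper.
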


\begin{proof}
We may replace the limit supremum over $n$ in (\ref{eqn:modulusbound}) by a supremum
over $n$ because for each $n$ and $i$ the expected c\'{a}dl\'{a}g modulus of continuity
$\E{m^n_i(T,\delta)} \searrow 0$ as $\delta \searrow 0$.

The sequence $\{\mu^n\}$ is tight if for every $T < \infty$ the empirical measure of
the restriction of the paths $f^n_1, \ldots, f^n_n$ to $[0,T]$ is tight. Prokhorov's Theorem
\cite[Theorem 14.3]{Kallenberg} provides tightness if for any $\eps > 0$ there exists a compact
$A_{\eps} \subset \M(D([0,T]))$ such that $\sup_n \pr{\mu_n \notin A_{\eps}} \leq \eps$.
A subset $A \subset \M(D([0,T]))$ is compact if for every integer $r \geq 1$ there
is a compact subset $K_r \subset D([0,T])$ such that $\sup_{\mu \in A} \mu(K_r^c) \leq 1/r$.
The compact subsets of $D([0,T])$ in the Skorokhod topology consists of functions with a
common c\'{a}dl\'{a}g modulus of continuity: for any $\eps$ there is a $\delta$ such that
$m_f(\delta) \leq \eps$ for every function $f$ in the subset. We apply these criteria to
construct the required compact subsets $A_{\eps}$.

Due to the assumption in (\ref{eqn:modulusbound}), for fixed $\eps > 0$ there exists $\delta_k \searrow 0$
such that $\sup_n \frac{1}{n} \sum_i \E{m^n_i(T, \delta_k)} \leq \eps/k^4$. Consider the following
compact subset of $D([0,T])$:
$$A_{\eps,r} = \{ f \in D([0,T]) : m_f(\delta_k) \leq r^3/k^2\;\text{for every}\;k\}.$$
Set $$A_{\eps} = \{ \mu \in \M(D([0,T])): \mu(A_{\eps,r}) \geq 1-(1/r)\;\text{for every}\;r\}.$$
Then $A_{\eps}$ has compact closure in $\M(D([0,T],[0,1]))$ and it suffices to show
that for $\pr{\mu^n \notin A_{\eps}} \leq \eps$ for every $n$.
To this end, observe that for any subset $A \subset D([0,T])$ we have $\E{\mu^n(A)} = (1/n) \sum_i \pr{f^n_i \in A}$.
Applying union bounds in $r$ and $k$ respectively, and also Markov's inequality, we get
\begin{align*}
\pr{\mu^n \notin A_{\eps}} & \leq \sum_r \pr{\mu^n(A_{\eps,r}^c) > 1/r} \;\;(\text{union bound over}\;r) \\
& \leq \sum_r r \, \E{\mu^n(A_{\eps,r}^{c})} \;\; (\text{Markov's inequality})\\
& \leq \sum_r \sum_k r\; \frac{1}{n} \sum_i \pr{m^n_i (T, \delta_k) > \frac{r^3}{k^2}} \;\;(\text{union bound over}\;k) \\
& \leq \sum_{r,k} \frac{k^2}{r^2} \; \frac{1}{n} \sum_i \E{m^n_i(T, \delta_k)} \;\;(\text{Markov's ineqaulity})\\
& \leq \eps \,\sum_{r,k} (rk)^{-2}.
\end{align*}

We conclude that $\pr{\mu^n \notin A_{\eps}} \leq B \eps$ for a constant $B$,
which provides tightness of the sequence $\{\mu^n\}$. To deduce the second
statement of the lemma let $p_n(\delta) = \max_i \pr{m^n_i(T, \delta) > m(\delta)}$
for $\delta \leq 1$. Observe that $(1/n) \sum_i \E{m^n_i(T, \delta)} \leq m(\delta) + 2p_n(\delta)$.
Taking limits in $n$ and then $\delta \to 0$ we see that the criterion in (\ref{eqn:modulusbound}) holds.
\end{proof}

\paragraph{Deterministic limits of random measures}

In order to show that the limit of $\nu^n$ is deterministic we will use
the following criterion. Suppose $\nu$ is a random measure in $\M(D(\R_{+}))$.
Then $\nu$ is a $\M(D(\R_{+}))$-valued random variable.
As such, we may first sample an outcome of $\nu$ and then sample two independent
processes, $X$ and $Y$, from said outcome. The joint law of the pair $(X,Y)$ is then
a probability measure on $D(\R_{+},[0,1]^2)$, which we call the law of two samples from $\nu$.
We may also sample an outcome from an independent copy of $\nu$ itself and then
sample a process $Z$ from that outcome. If the joint law of $(X,Y)$ equals the joint law of
$(X,Z)$ then $\nu$ is deterministic. This is the content of the following lemma.

\begin{lemma} \label{lem:concentrationcriterion}
Let $K$ be a compact metric space and $\eta$ be a random measure on $K$.
Let $x,x'$ be two samples from $\eta$ and let $x''$ be a sample from an
independent copy of $\eta$. If the joint law of $(x,x')$ equals the joint law of $(x,x'')$
as $K^2$-valued random variables, then there is a $\mu \in \M(K)$ such
that $\eta \equiv \mu$ almost surely.
\end{lemma}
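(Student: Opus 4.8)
The plan is to convert the hypothesis about equal joint laws into a variance computation for the random integrals $f \mapsto \int_K f\,d\eta$, and then to upgrade an almost-sure statement that holds for each fixed $f$ into one that holds simultaneously for all $f$ using separability of $C(K)$.

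First I would unpack the two sampling procedures. Conditionally on the realization of $\eta$, the two samples $x, x'$ are independent with common law $\eta$, so for any continuous $f, g : K \to \R$,
$$\E{f(x)\,g(x')} = \E{\int_K f\,d\eta \int_K g\,d\eta}.$$
By contrast $x''$ is drawn from an independent copy $\eta'$ of $\eta$, and given $(\eta, \eta')$ the samples $x$ and $x''$ are independent, whence
$$\E{f(x)\,g(x'')} = \E{\int_K f\,d\eta}\,\E{\int_K g\,d\eta}.$$
The assumed equality of the laws of $(x,x')$ and $(x,x'')$ forces these two quantities to coincide for every pair $(f,g)$.

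Next I would specialize to $g = f$, which yields $\E{(\int_K f\,d\eta)^2} = (\E{\int_K f\,d\eta})^2$; that is, the real random variable $\int_K f\,d\eta$ has zero variance and therefore equals its mean almost surely. Writing $\mu = \E{\eta}$ for the mean measure---a Borel probability measure on $K$ defined by $\mu(A) = \E{\eta(A)}$ and satisfying $\int_K f\,d\mu = \E{\int_K f\,d\eta}$ by Fubini---this says that for each fixed $f \in C(K)$ we have $\int_K f\,d\eta = \int_K f\,d\mu$ almost surely.

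The only delicate point, and the main obstacle, is that the null set in the last statement depends on $f$, so I cannot immediately conclude $\eta = \mu$. Here I would use that $K$ compact metric implies $C(K)$ separable: fix a countable dense family $\{f_k\} \subset C(K)$ and let $\Omega_0$ be the intersection of the almost-sure events $\{\int_K f_k\,d\eta = \int_K f_k\,d\mu\}$, which still has full probability. On $\Omega_0$ the uniform bound $|\int_K f\,d\eta - \int_K f\,d\mu| \leq 2\|f - f_k\|_\infty$ together with density shows $\int_K f\,d\eta = \int_K f\,d\mu$ for every $f \in C(K)$. Since a Borel probability measure on a compact metric space is determined by its integrals against continuous functions, we get $\eta = \mu$ on $\Omega_0$, so $\eta \equiv \mu$ almost surely.
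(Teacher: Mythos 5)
Your proposal is correct and follows essentially the same route as the paper: both reduce the hypothesis to the zero-variance identity $\E{I_f^2}=\E{I_f}^2$ for $I_f=\int f\,d\eta$, define the mean measure via the Riesz representation theorem, and pass from a countable dense family in $C(K)$ to all continuous functions to control the $f$-dependent null sets. Your explicit bound $|\int f\,d\eta - \int f\,d\mu| \leq 2\|f-f_k\|_\infty$ is a slightly more detailed rendering of the paper's approximation step, but the argument is the same.
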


\begin{proof}
Suppose $\eta$ is defined on a probability space $(\Omega, \Sigma, P)$ and
let $\eta(\omega)$ denote an outcome from $\eta$. As $\eta(\omega)$ is a
Borel probability measure on $K$ we may consider the integral
$I_f(\omega) =  \int f \,d \eta(\omega)$ of a continuous function $f: K \to \R$.
Then $\omega \to I_f(\omega)$ defines a $\R$-valued random variable.
The assumption that the law of two samples from $\eta$ is the same as
the law of samples from two independent copies of $\eta$ implies
that $\E {I_f^2} = \E{I_f}^2$ for every continuous $f$. Therefore, $\E{(I_f-\E{I_f})^2} = 0$.
This implies that there is an $\Omega_f \subset \Omega$ with $P(\Omega_f) =1$
such that $I_f(\omega) = \E{I_f}$ for every $\omega \in \Omega_f$.

Define a deterministic measure $\E{\eta} \in \M(K)$ by the criterion
that for every continuous $f : K \to \R$ we have $\int f \, d \E{\eta} = \E{I_f}$.
This defines the measure by the Reisz representation theorem as $K$ is compact.
Consider a countable dense set of continuous function $\{f_n\}$ from $K \to \R$ with respect
to the topology of uniform convergence. Set $\Omega_{\infty} = \cap_n \Omega_{f_n}$.
Then $P(\Omega_{\infty}) = 1$, and for every $\omega \in \Omega_{\infty}$ we have that
$I_{f_n}(\omega) = \int f_n \,d\E{\eta}$ for every $n$.

We may approximate an arbitrary continuous function by the $f_n$'s.
Thus, we deduce that $I_f(\omega) = \int f\,d\E{\eta}$ for every continuous $f: K \to \R$
so long as $\omega \in \Omega_{\infty}$. From the Reisz representation theorem we conclude
that $\eta(\omega) = \E{\eta}$ for every $\omega \in \Omega_{\infty}$, as required.
\end{proof}

\section{Proof of Theorem \ref{thm:interchangeprocess}} \label{sec:interchange}

The trajectory of each particle in $\Int$ is a continuous time simple random walk on $P_n$.
In order to show convergence of the empirical measure $\nu^n$ of particle trajectories
we will first show that it has subsequential limits by the criterion on Lemma \ref{lem:compactcriterion}.
Any subsequential limit will be a random permuton process with sample paths of class H\"{o}lder$(1,1/8)$
due to the estimate provided in Lemma \ref{lem:compactness} below. Since the set of such functions is compact in
$D(\R_{+})$ we will then verify the criterion of Lemma \ref{lem:concentrationcriterion} to conclude that any
subsequential limit must be a deterministic permuton process. Lemma \ref{lem:concentration} provides
the quantitate estimate used to verify this criterion. Finally, we will identify the limit as stationary Brownian
motion on $[0,1]$ via Donsker's theorem. The most technical part of the argument is the
proof of Lemma \ref{lem:concentration}, which is given in Section \ref{sec:concentrationproof}.

\begin{lemma}[Tightness] \label{lem:compactness}
For $T < \infty$, $\delta > 0$ and any particle trajectory $T^n_i$ of $\Int$,
$$ \pr{\sup_{s, t \in [0,T];\;|t-s| \leq \delta} \, \left |T^n_i(t) - T^n_i(s) \right | > \delta^{1/8}}
\leq 10^3\,T \,\big (\delta^{1/2} + \frac{\delta^{-1/2}}{n^2} \big ).$$

Consequently, the empirical measures $\nu^n$ of the particle trajectories have subsequential
limits and all limit points are random permuton processes with sample paths of class
H\"{o}lder$(1,1/8)$ almost surely.
\end{lemma}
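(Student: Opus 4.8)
The plan is to bound the oscillation of a single particle trajectory by that of a \emph{free} random walk on $\Z$, and then to extract the two terms $\delta^{1/2}$ and $\delta^{-1/2}/n^2$ from a fourth-moment maximal inequality. First I would use that the marginal law of a single trajectory $t\mapsto\Int_t(i)$ is a rate-$1$ continuous-time simple random walk on $P_n$ (each adjacent edge fires at rate $1/2$, and at an endpoint the self-loop fires at rate $1/2$). Such a reflected walk is the image of a free walk under folding: let $S$ be a rate-$1$ simple random walk on $\Z$ with $S(0)=0$, and let $\phi:\Z\to\{1,\dots,n\}$ be the triangle-wave map reflecting $\Z$ at the half-integers $1/2$ and $n+1/2$. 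Then $k\mapsto\phi\big(k+S(\cdot)\big)$ is exactly the path-graph walk started at $k$, since at an endpoint the two $\phi$-preimages make the projected walk stay at rate $1/2$ and step inward at rate $1/2$, which is the effect of the self-loop. As $\phi$ is $1$-Lipschitz on $\Z$, increments only contract:
\[
\left|T^n_i(t)-T^n_i(s)\right|=\frac1n\left|\phi\big(i+S(n^2t)\big)-\phi\big(i+S(n^2s)\big)\right|\le\frac1n\left|S(n^2t)-S(n^2s)\right|,
\]
so it suffices to bound $\pr{\sup|S(u)-S(v)|>n\delta^{1/8}}$ over $u,v\in[0,n^2T]$ with $|u-v|\le n^2\delta$.

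Next I would discretize time using two interleaved partitions of $[0,n^2T]$ into blocks of length $2n^2\delta$, offset by $n^2\delta$, arranged so that every admissible pair $u,v$ lies inside a single block of one of the two partitions; there are $\approx T/\delta$ blocks in total. On each block $|S(u)-S(v)|$ is at most the oscillation of $S$, which equals $\max S-\min S\le 2\sup_{w\le 2n^2\delta}|\tilde S(w)|$ for the walk $\tilde S$ obtained by restarting $S$ at the left endpoint of the block. A union bound over the blocks then reduces the whole estimate to a single maximal inequality, namely a bound on $\pr{\sup_{w\le 2n^2\delta}|\tilde S(w)|>n\delta^{1/8}/2}$.

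The crux is the choice of moment. I would apply Doob's $L^4$ maximal inequality to the c\`adl\`ag martingale $\tilde S$, for which the fourth moment is explicit: from $\E{e^{\lambda\tilde S(L)}}=\exp\!\big(L(\cosh\lambda-1)\big)$ one reads off $\E{\tilde S(L)^4}=3L^2+L$. With $L=2n^2\delta$ and threshold $a=n\delta^{1/8}/2$, the bound $(4/3)^4\,\E{\tilde S(L)^4}/a^4$ has two summands proportional to $\delta^{3/2}$ (coming from the $L^2$ term) and to $\delta^{1/2}/n^2$ (coming from the $L$ term); multiplying by the $\approx T/\delta$ blocks turns these into $\delta^{1/2}$ and $\delta^{-1/2}/n^2$, and tracking the numerical constants yields the stated bound $10^3\,T(\delta^{1/2}+\delta^{-1/2}/n^2)$. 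I expect this moment selection to be the main obstacle. The second moment is too weak --- a union bound over the $T/\delta$ blocks diverges as $\delta\to0$ --- while an exponential (sub-Gaussian) bound is awkward, since the optimal exponential tilt $\lambda\sim\delta^{-7/8}/n$ leaves the Gaussian regime once $\delta\lesssim n^{-8/7}$. The fourth moment is precisely the one whose lower-order term $L$ reproduces the $\delta^{-1/2}/n^2$ correction uniformly in $n$ and $\delta$.

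Finally I would deduce the corollaries from Lemma \ref{lem:compactcriterion}. Taking the uniform partition of mesh $\delta$ shows $m^n_i(T,\delta)\le\sup_{|t-s|\le\delta}|T^n_i(t)-T^n_i(s)|$, so the main estimate gives $\max_i\pr{m^n_i(T,\delta)>\delta^{1/8}}\le 10^3\,T(\delta^{1/2}+\delta^{-1/2}/n^2)$; letting $n\to\infty$ and then $\delta\to0$ verifies the second criterion of that lemma with $m(\delta)=\delta^{1/8}$, so $\{\nu^n\}$ is tight and admits weak subsequential limits. Each limit point is a permuton process by the observation in Section \ref{sec:intro}: for fixed $t$ the measure $(1/n)\sum_i\delta_{T^n_i(t)}$ is uniform on $\{i/n\}$, hence the one-time marginals are $\Uni$. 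Since the uniform modulus tends to $0$ the limiting paths carry no jumps, so on the support of any limit the $J_1$-topology coincides with uniform convergence; summing the estimate over dyadic scales $\delta=2^{-k}$ (where $\sum_k 2^{-k/2}<\infty$) and passing to the limit via Borel--Cantelli and the portmanteau theorem shows that almost every sampled path $f$ obeys $\sup_{|t-s|\le 2^{-k}}|f(t)-f(s)|\le 2^{-k/8}$ for all large $k$, i.e. is H\"older-$(1/8)$. Such H\"older balls are compact in $D(\R_{+})$, consistent with the tightness obtained above.
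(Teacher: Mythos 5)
Your proposal is correct and follows essentially the same route as the paper: reduce a single trajectory to a free rate-$1$ walk on $\Z$ via the Lipschitz folding map, apply Doob's $L^4$ maximal inequality with the explicit fourth moment $3L^2+L$ on time blocks of length of order $n^2\delta$, union-bound over the $O(T/\delta)$ blocks, and feed the resulting modulus estimate into Lemma \ref{lem:compactcriterion}. The only differences (two interleaved partitions versus one partition with a factor-$2$ triangle inequality, and deriving the fourth moment from the moment generating function rather than from the Poissonized discrete walk) are cosmetic.
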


\begin{lemma}[Concentration] \label{lem:concentration}
For any two particles $i \neq j$ there exists coupled random trajectories
$T_1(t)$, $T_2(t)$ and $T_3(t)$ for $t \geq 0$ with the following properties.
The pair $(T_1,T_2)$ has the joint law of $(T^n_i, T^n_j)$,
the trajectories of particles $i$ and $j$ in $\Int$. $T_3$ is independent of $T_1$ and has the
law of $T^n_j$. Finally, for some universal constant $C$,
$$ \pr{\sup_{0 \leq t \leq T} \,|T_2(t) - T_3(t)| > n^{-\frac{1}{4}}} \leq \frac{C\,T^{1/2}}{n^{1/2}}.$$
\end{lemma}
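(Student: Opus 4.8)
The plan is to realize particle $j$'s trajectory together with an independent ``free'' copy on a single probability space, using the graphical representation of $\Int$ by independent Poisson edge clocks. Write $X(t) = \Int_{n^2 t}(j)$ and $Y(t) = \Int_{n^2 t}(i)$ for the (space-unrescaled) positions of particles $j$ and $i$, so $T_1 = Y/n$ and $T_2 = X/n$. The structural fact I would exploit is that $X$ and $Y$ are each simple random walks driven by the clocks of the edges adjacent to their current positions, and that they influence one another \emph{only} through the single shared edge at the moments when they are adjacent: such a firing swaps them. I would build $T_3 = W/n$ so that $W$ copies the moves of $X$ at every firing that does not involve this shared edge, and at each $i$--$j$ swap instead takes a step generated by fresh, independent randomness (with reflection at $1$ and $n$ built in so that $W$ has exactly the law of a rate-$1$ reflected walk). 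Because every increment of $W$ is then a function only of clocks disjoint from those driving $Y$ together with the auxiliary coins, $W$ is independent of $T_1=Y/n$ and has the law of $T^n_j$, while $(T_1,T_2)$ keeps the joint law of $(T^n_i,T^n_j)$.

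With this coupling the discrepancy $M(t) := X(t)-W(t) = n\,(T_2(t)-T_3(t))$ changes only at the swap times of $i$ and $j$: there $X$ moves one step toward $Y$'s former site while $W$ takes an independent fair step, so $M$ jumps by an element of $\{-2,0,2\}$. The conditional mean of a jump is the signed swap direction, but \emph{the order of the two particles flips at every swap} --- between consecutive swaps the signed difference $Y-X$ keeps its sign, since it can only change sign by a swap --- so these conditional means alternate and their partial sums stay bounded by $1$. Hence $M$ differs from a mean-zero martingale by a bounded term, and its quadratic variation is, up to constants, the number $N$ of $i$--$j$ swaps up to real time $n^2T$. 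Doob's $L^2$ maximal inequality applied to the martingale part, at the rescaled threshold $n^{-1/4}$ (i.e. $|M|>n^{3/4}$), then gives
\[
\pr{\sup_{0 \le t \le T} |T_2(t)-T_3(t)| > n^{-1/4}} \;\le\; \frac{C_0\,\E{N}}{(n^{3/4})^2} \;=\; \frac{C_0\,\E{N}}{n^{3/2}} .
\]

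Everything reduces to the estimate $\E{N} \le C\,n\,T^{1/2}$, and this is where I expect the real work and the main obstacle to lie. The number of swaps is governed by the time the difference process $D = Y-X$ spends in the adjacency set $\{|D|=1\}$, and $D$ behaves like a simple random walk run at rate of order $1$ for real time of order $n^2T$ (each particle contributing firings at rate $1$). The quantity to control is thus the expected local time at the origin of a walk running at rate $\sim n^2$ up to time $T$, which is of order $n\,T^{1/2}$ --- equivalently, in the scaling $\eps = 1/n$, a rate-$\eps^{-2}$ walk makes $\sim \eps^{-1}T^{1/2}$ returns to the origin. Getting this power right is exactly the step mis-stated in \cite{DN}, so I would establish it directly: bound the return probabilities $p_s(0,0) \sim s^{-1/2}$ of the difference walk and sum over $s \le n^2T$, controlling the reflection at the endpoints $1$ and $n$ by comparison with the walk on $\Z$. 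Substituting $\E{N}\le C\,n\,T^{1/2}$ into the display yields the asserted bound $C\,T^{1/2}/n^{1/2}$.

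The remaining points are routine but need care: checking that $W$ is genuinely a reflected simple random walk with the law of $T^n_j$ and is independent of $T_1$ (the only delicate issue being that $W$'s increments are selected according to $X$'s position, which one verifies does not reintroduce dependence on $Y$, since each selected clock is itself independent of $Y$'s clocks), and confirming that the bounded drift correction and the $\{-2,0,2\}$ jump sizes only affect the constant. The conceptual heart of the argument is the local-time bound of order $n\,T^{1/2}$.
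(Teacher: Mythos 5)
Your overall architecture matches the paper's: build a coupled third trajectory, control $X-W$ by Doob's $L^2$ inequality, and reduce everything to the expected local time of the difference walk, which is of order $n\sqrt{T}$ up to real time $n^2T$ (and you correctly identify the $\eps^{-1}$ versus $\eps$ error in Durrett--Neuhauser, which is exactly the point the paper emphasizes). However, the coupling you propose does not produce a process with the required law, and this is a genuine gap. Suppose $i$ and $j$ are adjacent, say $X=x$ and $Y=x+1$. The only edge that can move $X$ toward $Y$ is the shared edge $\{x,x+1\}$; the other edge adjacent to $X$ necessarily moves $X$ \emph{away} from $Y$. Under your rule, $W$ copies that deterministic away-step (rate $1/2$) and takes a fresh fair step only at shared-edge firings (rate $1/2$), so during every adjacency period $W$ steps away from $Y$ at rate $3/4$ and toward it at rate $1/4$: $W$ is a biased walk there, and $T_3=W/n$ does not have the law of $T^n_j$. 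The independence claim fails for the same reason: the shared clock both drives $Y$ and schedules the times at which $W$ uses a fresh coin rather than a forced copy, so $W$'s increments are not a function of clocks disjoint from those driving $Y$.

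The paper's construction repairs exactly this point: the third trajectory is run off an entirely fresh, independent family of Poisson clocks throughout each maximal adjacency interval $[\tau_k,\tau_k^{+})$ (not just at the swap instants), and copies the increments of the second trajectory only outside those intervals, where that trajectory is driven by clocks disjoint from the first particle's. With that change your martingale and local-time computation goes through, and your (correct, but then unnecessary) observation that the swap drifts alternate in sign can be dropped, since the increments over adjacency intervals become genuinely conditionally centered. One further simplification the paper makes, which you defer as a ``comparison with the walk on $\Z$'': it lifts the whole process to the interchange process on $\Z$ via a covering map $\pi_n$, so that $|S_1-S_2|-1$ is an honest reflected walk on $\Z_{\geq 0}$ and the boundary self-loops of $P_n$ never enter the local-time estimate; the Lipschitz projection $\pi_n$ then transfers the $L^{\infty}$ bound back to $P_n$ at the end.
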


\paragraph{A covering argument}

In order to prove Lemmas \ref{lem:compactness} and \ref{lem:concentration}
we interpret the law of trajectories of particles in $\Int$ as projections of particle
trajectories of the interchange process on $\Z$. Consider the map $\pi_n : \Z \to V(P_n)$
defined as follows. First, map an integer $x$ to $x \;(\mathrm{mod}\;2n)$ with coset
representative chosen from the set $\{1,\ldots, 2n\}$. Second, map the coset representative
$y$ to $y \in V(P_n)$ if $1 \leq y \leq n$, and otherwise, map $y$ to $2n+1 - y \in V(P_n)$.
The map $\pi_n$ is the composition of the first map followed by the second.
Note that $\pi_n$ preserves vertex adjacencies: $\{\pi_n(x), \pi_n(x+1)\}$ is an edge of $P_n$
for every $x \in \Z$. In particular, $\pi_n$ is Lipschitz:
$$|\pi_n(a) - \pi_n(b)| \leq |a-b| \;\;\text{for every}\;\; a, b \in \Z.$$

Consider the continuous time interchange process on $\Z$ where particles move
according to edge firings of independent Poisson processes $\{ \mathrm{Poi}_{\{x,x+1\}}\}$
for $x \in \Z$, each firing at rate $1/2$. Let $S_i(t)$ be the trajectory of particle $i$.
Then $(S_i(t); t \geq 0)$ has the law of continuous time simple random walk
(SRW) on $\Z$ started from $i$. The continuous time SRW may be interpreted at the law
of $\{i + Z_{N(t)}; t \geq 0\}$, where $Z_0 = 0, Z_1, \ldots$ is a discrete time SRW on $\Z$
and $(N(t); t \geq 0)$ is a rate 1 Poisson process on $\R$ that is independent of $Z_0, Z_1, \ldots$.
It is easy to see from this that
$$\E{|S_i(t_2)-S_i(t_1)|^4} = 3(t_2-t_1)^2 + t_2-t_1 \;\;\text{for all times}\;\; 0 \leq t_1 \leq t_2< \infty.$$
Observe that for any finite set of particles
$1 \leq i_1 < \ldots < i_k \leq n$, the joint law of the trajectories $(\Int_t(i_1), \ldots, \Int_t(i_k); t \geq 0)$
is the same as the joint law of $\big(\pi_n(S_{i_1}(t)), \ldots, \pi_n(S_{i_k}(t)); t \geq 0\big)$.

Let $(\mathcal{F}(t); t \geq 0)$ denote the natural filtration for the above interchange process
on $\Z$. Namely, $\mathcal{F}(t)$ is generated by the sigma-algebras
$\mathcal{F}_{\{x,x+1\}}(t) = \sigma( \mathrm{Poi}_{\{x,x+1\}}[0,t])$ over all $x \in \Z$.
Then $(\mathcal{F}(t); t \geq 0)$ is a right continuous filtration and each particle trajectory
$S_i$ is a SRW on $\Z$ adapted to the filtration.


\paragraph{Proof of Lemma \ref{lem:compactness} (Tightness)}
\begin{proof}
The cover map $\pi_n$ allows us to bound the modulus of continuity of $\Int_t(i)$ by way of the
modulus of continuity of $S_i$. For each $i$, the process $S_i(t) - S_i(0)$ is a martingale
w.r.t.~the filtration $(\mathcal{F}(t); t \geq 0)$. By Doob's maximal inequality for matingales
(continuous time with c\`{a}dl\`{a}g paths) we have that that for all particles $i$ and
times $0 \leq t_1 \leq t_2 < \infty$,
\begin{equation} \label{eqn:Doob}
\pr{ \sup_{t_1 \leq s \leq t_2} \, |S_i(s) - S_i(t_1)| > \lambda} \leq \frac{16 \,\E{|S_i(t_2)-S_i(t_1)|^4}}{\lambda^4}
\leq \frac{48\,|t_2-t_1|^2 + 16\,|t_2-t_1|}{\lambda^4}.
\end{equation}

The law of the trajectory $T^n_i$ of $\Int$ is the same at the law of $(\pi_n(S_i(n^2t))/n; t \geq 0)$.
As $\pi_n$ is Lipschitz we have from (\ref{eqn:Doob}) that for all particles $i$, times
$t \geq 0$, and $0 \leq \delta \leq 1$,
\begin{align}
\nonumber \pr{\sup_{t \leq s \leq t + \delta} \, \left |T^n_i(s) - T^n_i(t) \right | > \delta^{1/8}}
&= \pr{ \sup_{t \leq s \leq (t+\delta)} \, \Big | \frac{\pi_n(S_i(n^2s))}{n} - \frac{\pi_n(S_i(n^2t))}{n} \Big | >  \delta^{1/8}} \\
\nonumber &\leq \pr{\sup_{t \leq s \leq (t+\delta)} \, | S_i(n^2s) - S_i(n^2t) | > n \delta^{1/8}} \\
\label{eqn:interchangetightness} &\leq 2^6 \big (\delta^{3/2} + \frac{\delta^{1/2}}{n^2} \big ).
\end{align}

Fix $T < \infty$ and set $m_i(\delta) = \sup_{s,t \in [0,T]: |s-t| \leq \delta} \big |T^n_i(t)-T^n_i(s) \big |$.
Note that $m_i$ dominates the c\'{a}dl\'{a}g modulus of continuity of $T^n_i$ by considering
the partition of $[0,T]$ into equally spaced points of mesh size $\delta$. We have that
$$m_i(\delta) \leq 2 \max_{1 \leq j \leq \lceil T/\delta \rceil} \,\sup_{(j-1)\delta \,\leq \, s,t \, \leq \, j\delta}\,
\Big |T^n_i(t)- T^n_i(s) \Big |.$$
Employing an union bound over $j$ and utilizing (\ref{eqn:interchangetightness}) we deduce that
$$\max_{1 \leq i \leq n} \pr{m_i(\delta) > 2 \delta^{1/8}} \leq
2^6\,\left \lceil \frac{T}{\delta} \right \rceil (\delta^{3/2} + \delta^{1/2}n^{-2}) \leq
2^7 T \,(\delta^{1/2}+ \frac{\delta^{-1/2}}{n^2}).$$

From Lemma \ref{lem:compactcriterion} we conclude that the random measures
$\{\nu^n\}$ with times restricted to $[0,T]$ has subsequential weak limits converging to
random measures on $D([0,T])$. As this holds for every $T$ it follows that $\{\nu^n\}$ has weak
subsequential limits converging to random measure on $D(\R_{+})$.
Any limit point is a random permuton process $(\mathbf{X}(t); 0 \leq t \leq T)$.
The estimate (\ref{eqn:interchangetightness}) readily implies that the outcomes of $\mathbf{X}$
are permuton processes with sample paths of class H\"{o}lder$(1,1/8)$.
This completes the proof of Lemma \ref{lem:compactness}.
\end{proof}

\paragraph{\textbf{Proof of Theorem \ref{thm:interchangeprocess}}}
Lemma \ref{lem:concentration} is proved in Section \ref{sec:concentrationproof} below.
We will finish the proof of Theorem \ref{thm:interchangeprocess} from the two key lemmas.
Lemma \ref{lem:compactness} shows that $\nu^n$ has subsequential limits that are supported
on permuton processes with H\"{o}lder$(1,1/8)$ sample paths. Let $\mathbf{X}$ be such a
limit point, which we will conclude is a deterministic permuton process. By way of Lemma
\ref{lem:concentrationcriterion} it suffices to show that the joint law of two samples from
$\mathbf{X}$ is the same as the law of samples from two independent copies of $\mathbf{X}$.
After reducing to a subsequence we may assume that $\nu^n$ converges to $\mathbf{X}$.

Employing Lemma \ref{lem:concentration}, for every $1 \leq i \leq n$ and $j \neq i$ we find coupled trajectories
$(T^n_i,T^n_j,T^n_{i,j})$ such that $(T^n_i,T^n_j)$ has the joint law of the trajectories of
particles $i$ and $j$ in $\Int$ while $T^n_{i,j}$ has the law of $T^n_j$ and is independent
of $T^n_i$. We also have that for some universal constant $C$,
\begin{equation} \label{eqn:supestimate}
\pr{\Big | \Big | T^n_j(t) - T^n_{i,j}(t) \Big| \Big |_{\mathrm{L}^{\infty}[0,T]} > \frac{2}{n^{1/4}}}
\leq \frac{C \sqrt{T}}{\sqrt{n}}.\end{equation}

For every $1 \leq i , j \leq n$ set $X_i = T^n_i$, $Y_j = T^n_j$ if $j \neq i$
and $Y_i = X_i$, and $Z_{i,j} = T^n_{i,j}$ if $i \neq j$ and $Z_{i,i}$ an independent
copy of $X_i$. Let $I, J \sim \uni{\{1,\ldots, n\}}$ be independent random indices
that are also independent of $\Int$. We omit writing the dependence of $I$ and
$J$ on $n$. The pair $(X_I, Y_J)$ converges to the law of two samples from $\mathbf{X}$ and
the pair $(X_I, Z_{I,J})$ converges to the law of samples from two independent
copies of $\mathbf{X}$. It suffices to show that $|| (X_I,Y_J) - (X_I, Z_{I,J}) ||_{\mathrm{L}^{\infty}[0,T]}$
converges to zero in probability to conclude that $\mathbf{X}$ is deterministic.
To that end, note that

\begin{align*}
\pr{||Y_J - Z_{I,J}||_{\mathrm{L}^{\infty}[0,T]} > 2n^{-1/4}} &\leq \pr{||Y_J - Z_{I,J}||_{\mathrm{L}^{\infty}[0,T]} > 2n^{-1/4}; I \neq J} + \frac{1}{n}\\
&= \frac{1}{n^2-n} \sum_{i \neq j} \pr{||Y_j - Z_{i,j}||_{\mathrm{L}^{\infty}[0,T]} > 2n^{-1/4}} \;+ \frac{1}{n}\\
&\leq \max_{i \neq j} \; \pr{||Y_j - Z_{i,j}||_{\mathrm{L}^{\infty}[0,T]} > 2n^{-1/4}} \;+ \frac{1}{n}\\
&\leq \frac{C \sqrt{T}}{\sqrt{n}} + \frac{1}{n}.
\end{align*}

This concludes the proof that all limit points of $\nu^n$ are deterministic.
Also, the limit points are the limit points of $X_I$, that is, the weak limit points
of the trajectory of a random particle considered as a random function in
$D(\R_{+})$. The law of $X_I$ is the same as that of the process
$$\frac{2}{n}\, \pi_n(I + S(n^2t)) - 1\;\; \text{for}\;\; t \geq 0,$$
where $t \to S(t)$ is SRW on $\Z$ started from the origin and independent of $I$.

Consider the map $\pi : \R \to [0,1]$ defined as follows. First, map any
real number $x$ to its unique coset representative $y$ in $\R / 2\Z$ with $y \in (0, 2]$.
Then map $y$ to itself if $0 < y \leq 1$, and otherwise, map $y$ to $2-y \in [0,1)$.
The map $\pi$ is Lipschitz and $\big| \frac{\pi_n(k)}{n} - \pi(\frac{k}{n}) \big | \leq \frac{1}{n}$ for
every $k \in \Z$. This estimate between $\pi_n$ and $\pi$ implies that
$$ \Bigg | \Bigg | \Big (\frac{\pi_n(I+S(n^2t))}{n}\Big) - \Big(\pi \big( \frac{I+S(n^2t)}{n}\big)\Big)
\Bigg | \Bigg |_{\mathrm{L}^{\infty}[0,T]} \leq \frac{1}{n}.$$

The estimate above implies that any limit point of $X_I$ must be stationary Brownian motion
on $[0,1]$ so long as the process $\big (\pi(\frac{I+S(n^2t)}{n}); t \geq 0 \big)$ converges to
stationary Brownian motion on $[0,1]$. Donsker's Theorem implies that the process
$\big ( \frac{S(n^2t)}{n}; t \geq 0 \big)$ converges weakly on $D(\R_{+},\R)$ to standard
Brownian motion $(B(t); t \geq 0)$. As $I$ is independent of $S$ we deduce that the process
$$\Big (\, \frac{I+S(n^2t)}{n}; \, t \geq 0 \, \Big) \to (U + B(t); t \geq 0)\;\;\text{weakly},$$
where $U \sim \Uni$ is independent of $B$. Since $\pi$ is Lipschitz and thus continuous,
we have that $\big (\pi \left (\frac{I+S(n^2t)}{n}\right); t \geq 0 \big)$ converges to $\big (\pi(U + B(t)); t \geq 0 \big )$.
The latter process is stationary Brownian motion on $[0,1]$, which is Brownian motion stated
from an uniform random point in $[0,1]$ and reflected off of the lines $y=0$ and $y=1$.

We thus conclude that $\{\nu^n\}$ is tight and any limit point $\mathbf{X}$ must be stationary
Brownian motion on $[0,1]$. This implies that $\{\nu^n\}$ converges to stationary Brownian
motion on $[0,1]$.

\subsection{Concentration of the interchange process: Proof of Lemma \ref{lem:concentration}} \label{sec:concentrationproof}

We will prove Lemma \ref{lem:concentration} by utilizing the covering of $\Int$ by the interchange
process on $\Z$ introduced earlier. We will first construction appropriate coupled trajectories
for the interchange process on $\Z$ and then project them to $\Int$ using the covering map $\pi_n$.

Fix particles $1 \leq i < j \leq n$ and let $S_1$ and $S_2$ denote the trajectories $S_i$ and $S_j$
of particles $i$ and $j$ for the interchange process on $\Z$, respectively.
We define a new trajectory $S_3$ of particle $j$ that will be independent of $S_1$
but coupled with $S_2$ so that $||S_2-S_3||_{L^{\infty}[0,T]}$ is of order $\sqrt{T}$.
Roughly speaking, $S_3$ will take the same steps as $S_2$ except between times
when $|S_1 - S_2| = 1$, where $S_3$ will move independently as a SRW on $\Z$.
More precisely, consider the stopping times $0 = \tau_0^{+} < \tau_1 < \tau_1^{+} < \cdots$, where
$$\tau_j = \inf \{ t \geq \tau_{j-1}^{+} : |S_1(t)-S_2(t)| = 1\} \quad \text{and}
\quad \tau_j^{+} = \inf \{ t \geq \tau_j : |S_1(t)-S_2(t)| > 1\}.$$
As SRW on $\Z$ is recurrent the $\tau_j$s and $\tau_j^{+}$s are all finite and satisfy
$\tau_1 < \tau_{1}^{+} < \tau_2 < \cdots$ with probability 1. They are also strict stopping times
in the sense that the events $\{\tau_j \leq t\}$ and $\{\tau_j^{+} \leq t\}$ are $\mathcal{F}(t)$-measurable.
Set $\mathcal{C} = \cup_j [\tau_j, \tau_j^{+})$. Then $|S_1(t) - S_2(t)| = 1$ precisely for
times $t \in \mathcal{C}$.

In order to define the process $S_3$ we introduce a new collection of independent Poisson processes
$\{\mathrm{Poi}'_{\{x,x+1\}}\}$ for $x \in \Z$ that are also independent of the processes
$\{\mathrm{Poi}_{\{x,x+1\}}\}$ used to define the interchange process on $\Z$.
We may assume that the edge firings of the new Poisson processes
and the old one are all distinct. Let $S_3(0) = j$, and for times $t \in \mathcal{C}$ let
the trajectory $S_3$ move according to the edge firings of $\{\mathrm{Poi}'_{\{x,x+1\}}\}$.
More precisely, if $t \in \mathcal{C}$ and the edge $\{x,x+1\}$ fires at time $t$ in $\mathrm{Poi}'_{\{x,x+1\}}$
with $S_3(t-) = x$ then let $S_3(t) = x+1$. For other times $t$ the particle at $S_3(t)$ moves according
to the direction of the particles at $S_2(t)$. Namely, if $S_2(t)$ changes to $S_2(t) +1$ or $S_2(t)-1$
after encountering an edge firing at time $t$ then $S_3(t)$ also moves to $S_3(t)+1$ or $S_3(t)-1$, respectively,
in accordance to $S_2(t)$'s direction. In particular, $|S_3(t)-S_2(t)|$ changes only for times $t \in \mathcal{C}$.
The process $S_3$ is independent of $S_1$ and the law of $S_3$ is that of a SRW on $\Z$ stared from $j$.

\paragraph{\textbf{Deviation between $S_2$ and $S_3$}}
The processes $S_2(t)$ and $S_3(t)$ are both martingales with respect to the natural filtration
induced by $\{\mathrm{Poi}_{\{x,x+1\}}, \mathrm{Poi}'_{\{x,x+1\}}\}_{x \in \Z}$.
Therefore, $S_3(t) - S_2(t)$ is also a martingale with $S_3(0)-S_2(0)=0$. Doob's maximal inequality gives
\begin{equation} \label{eqn:longtimebound}
\pr{\sup_{0 \leq t \leq T} |S_3(t)-S_2(t)| > \lambda} \leq \frac{4}{\lambda^2}\, \E{|S_3(T)-S_2(T)|^2}.
\end{equation}
The rest of the argument shows that $\E{|S_3(T)-S_2(T)|^2}$ is of order $O(\sqrt{T})$.

Consider the random variable
\begin{equation} \label{eqn:J} J = \inf \{ j \geq 1 : \tau_j^{+} > T\}.\end{equation}
The number of times particles $i$ and $j$ are distance 1 apart within the time interval $[0,T]$ is
either $J$ or $J-1$. The random variable $J$ is $\mathcal{F}(T)$-measurable
because the event $\{ J \geq j\}$ equals $\mathcal{F}(T)$-measurable event
that particles $i$ and $j$ have been at distance 1 from each other at least $j$ times
within the time period $[0,T]$. Moreover, the event $\{J \geq j\}$ is $\mathcal{F}(\tau_j)$-measurable
because $\{J \geq j\} \cap \{\tau_j \leq t\}$ is the event that particles $i$ and $j$ have been
distance 1 apart at least $j$ times in the time interval $[0, \min \{t,T\}]$.

\begin{lemma} \label{lem:returns}
For $T \geq 1$, the expectation $\E{J} \leq 10 \sqrt{T}$.
\end{lemma}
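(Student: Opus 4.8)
The plan is to read $J$ as the number of returns to the boundary of the gap between the two particles, and to estimate this count through simple-random-walk return probabilities. Define the gap process $G(t) = |S_1(t) - S_2(t)|$, which lives on $\{1,2,\ldots\}$ because the two particles never coincide. By tracking how each incident edge firing moves $S_1$ or $S_2$, one checks that $G$ is a continuous-time birth--death chain: from any state $k \ge 2$ it moves to $k\pm1$ at rate $1$ each, while from state $1$ it moves to $2$ at rate $1$, the swap events (rate $1/2$) leaving $G$ unchanged. Thus $G$ behaves like a rate-$2$ symmetric walk in the interior and is reflected at $1$. The stopping times $\tau_j,\tau_j^{+}$ mark exactly the entrances of $G$ into the boundary state $1$ and the departures back to $2$, so $J-1$ is the number of such excursions completed by time $T$, and $\E{J} \le 1 + \E{V}$, where $V$ is the number of visits of $G$ to state $1$ begun in $[0,T]$.

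Each visit to state $1$ is entered by a down-step from state $2$, so (apart from the first visit when $G(0)=1$) we have $V \le 1 + A(T)$, where $A(T)$ is the number of $2 \to 1$ transitions of $G$ during $[0,T]$. This last quantity is a counting process whose compensator has intensity $\ind{G(s)=2}$, since the jump rate from $2$ to $1$ is $1$; hence $\E{A(T)} = \int_0^T \pr{G(s)=2}\,ds$ by Dynkin's formula. It therefore suffices to establish the return-probability bound $\pr{G(s)=2} \le C\,s^{-1/2}$, after which $\int_0^T C\,s^{-1/2}\,ds = 2C\sqrt{T}$ yields $\E{J} \le 10\sqrt{T}$ for $T \ge 1$ once the constants are tracked (the hypothesis $T \ge 1$ is exactly what absorbs the additive $O(1)$ terms). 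To obtain the pointwise bound I would pass to the embedded jump chain $\tilde G$ of $G$: after the shift $k \mapsto k-1$ this chain is precisely $|W|$ for an ordinary discrete-time simple random walk $W$, so $\tilde G_n = 2$ holds exactly when $W_n = \pm 1$, and the local central limit theorem gives $\pr{W_n = \pm 1} \le C'\,n^{-1/2}$. Since $G$ makes $\Theta(s)$ jumps by time $s$ (every state has exit rate between $1$ and $2$), de-conditioning on the number of jumps converts this into $\pr{G(s)=2} \le C\,s^{-1/2}$.

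The main obstacle is the reflecting, rate-modified boundary: near state $1$ the gap is not a free walk (it cannot reach $0$, and the swap events slow its escape), so neither the local CLT nor the identification $\tilde G - 1 \stackrel{d}{=} |W|$ applies verbatim, and the random number of continuous-time jumps of $G$ is correlated with the embedded walk itself. Reconciling these is the part that needs genuine care: I expect the cleanest route is to uniformize the chain at rate $2$, so that the number of steps in $[0,T]$ becomes an independent Poisson count, at the cost of a single lazy step at the boundary, and then to verify that this laziness does not change the order of $\pr{\tilde G_n = 2}$. This is precisely the return-count computation flagged in the introduction: the gap walk runs at the fixed rate $\Theta(1)$, so its number of returns to the boundary over time $T$ is of order $\eps^{-1}T^{1/2}$ with $\eps = \Theta(1)$, i.e. of order $\sqrt{T}$, and the entire estimate hinges on getting this order -- equivalently, the exponent $-1/2$ in the return probability -- correct.
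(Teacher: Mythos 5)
Your argument is correct, and it reaches the same quantitative heart as the paper's proof --- the $k^{-1/2}$ return probability of simple random walk, integrated to give $\sqrt{T}$ --- by a genuinely different middle route. The paper bounds $J$ by the number of visits of the gap walk $Z=|S_1-S_2|-1$ to the origin, handles the rate-$1/2$ swap events at the boundary by a stochastic domination (thinning) step that inflates the self-loop rate to $1$, folds the resulting reflected walk onto $\Z$, and then invokes Lemma \ref{lem:SRWvisits} to count visits directly via $\sum_k \pr{S_k=0}\le \sum_k k^{-1/2}$. You instead count the $2\to 1$ transitions of the gap through the compensator identity $\E{A(T)}=\int_0^T \pr{G(s)=2}\,ds$; this buys you something real, since the $2\to 1$ jump rate equals $1$ regardless of the swap events, so the entire self-loop/stochastic-domination discussion becomes unnecessary. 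The price is that you need the pointwise bound $\pr{G(s)=2}\le Cs^{-1/2}$ rather than an expected-visit count, but your plan for it is sound, and the boundary ``laziness'' you flag as the main obstacle is in fact not an approximation to be controlled but the exact answer: uniformizing at rate $2$ adds a rate-$1$ holding at state $1$, and the resulting embedded chain is precisely the image of discrete SRW $W$ on $\Z$ under the fold at the bond between $0$ and $1$ (send $x\mapsto x$ for $x\ge 1$ and $x\mapsto 1-x$ for $x\le 0$), which is lazy at the boundary with holding probability exactly $1/2$. Under this identification $\{\tilde G_n=2\}=\{W_n\in\{-1,2\}\}$ has probability at most $2/\sqrt{n}$ uniformly in the starting point, and de-Poissonizing the now-independent $\mathrm{Poisson}(2s)$ step count gives $\pr{G(s)=2}\le Cs^{-1/2}$; integrating and using $T\ge 1$ to absorb the additive constants yields $\E{J}\le 10\sqrt{T}$ as you indicate. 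The only remaining work is tracking the constants, which come out comfortably under $10$.
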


\begin{proof}
Consider the process $Z$ defined by $Z(t) = |S_2(t)-S_1(t)| - 1$, which takes values in
$\Z_{\geq 0} = \{0,1,2,\ldots\}$. The law of $Z$ is of a random walk on $\Z_{\geq 0}$
with edges firing as follows. The edges $\{x,x+1\}$ for $x \geq 0$ fire at rate 1 and
there is a self loop at $\{0\}$ that fires at rate $1/2$. Let $V$ be the number of times $Z$
visits the origin $\{0\}$ in the time interval $[0,T]$. Observe that $J \leq V + 1$ because the times
when $Z(t) = 0$ are the times when particles $i$ and $j$ are distance 1 apart in the interchange
process on $\Z$.

Consider the SRW on $\Z_{\geq 0}$ with all edges edges $\{x,x+1\}$ firing at rate 1 and also
a self loop at $\{0\}$ firing at rate 1. If we start the walk at $j-i-1$ then $V$ is stochastically
dominated by the number of visits to the origin of this SRW in the time period $[0,T]$. Indeed,
the edge firings of the self loop for the process $Z$ is stochastically dominated by the edge
firings of this SRW in the sense that the two processes can be coupled such that the edge firings
of the former can be made to be contained in the edge firings of the latter. This follows because
if every point of a Poisson process with rate 1 is deleted independently with probability $1/2$
then it results in a Poisson process of rate $1/2$ (thinning property of Poisson processes).
Hence, it suffices to bound the expected number of visits to the origin in time period $[0,T]$ of a
SRW on $\Z_{\geq 0}$ with all edges firing as rate 1.

We may cover the above graph on $\Z_{\geq 0}$, with the self loop, via a map $\hat{\pi} : \Z \to \Z_{\geq 0}$
that sends the law of SRW on $\Z$ to the law of SRW on $\Z_{\geq 0}$. This is entirely analogous
to the covering maps $\pi_n: \Z \to P_n$ introduced earlier. Visits to $\{0\}$ of the SRW on
$\Z_{\geq 0}$ then corresponds to visits to the set $\{0,1\}$ for the SRW on $\Z$.
The expected number of visits of SRW on $\Z$, started from any vertex, to the set $\{0,1\}$
in time period $[0,T]$ is at most $6 \sqrt{T}$ by Lemma \ref{lem:SRWvisits}. This implies that $\E{J} \leq 10 \sqrt{T}$.
\end{proof}

We now proceed to bound $\E{|S_3(T)-S_2(T)|^2}$.
As $S_3(t)-S_2(t)$ changes only for times $t \in \mathcal{C}$, we have
\begin{align} \label{eqn:displacement}
S_3(T) - S_2(T) &= \sum_{j} \,\left [(S_3(\tau_j^{+}) - S_3(\tau_j)) - (S_2(\tau_j^{+}) - S_2(\tau_j))\right] \cdot \ind{j < J} + \\
\nonumber &+ \left [(S_3(T) - S_3(\tau_J)) - (S_2(T) - S_2(\tau_J)) \right] \cdot \ind{T > \tau_J}.
\end{align}
We consider the distributions of the random variables in the summands above
in order to calculate their mean and variance.

We claim that $[S_2(\tau_j^{+}) - S_2(\tau_j)] \cdot \ind{j < J}$  is $\mathcal{F}(T)$-measurable
for every $j$. Indeed, the event $\{j < J\}$ implies $\{\tau_j^{+} \leq T\}$, which then implies that
$$[S_2(\tau_j^{+}) - S_2(\tau_j)]\ind{j < J} =  [S_2(\tau_j^{+}) - S_2(\tau_j)] \ind{\tau_j^{+} \leq T} \ind{j < J}.$$
The r.h.s.~above is $\mathcal{F}(T)$-measurable as both
$[S_2(\tau_j^{+}) - S_2(\tau_j)] \ind{\tau_j^{+} \leq T}$ and $\ind{j < J}$ are $\mathcal{F}(T)$-measurable.
Similarly, $(S_2(T) - S_2(\tau_J))\ind{T > \tau_J}$ is $\mathcal{F}(T)$-measurable. Indeed, observe
that $\{T > \tau_J\}$ is an $\mathcal{F}(T)$-measurable event since it means that particles $i$ and $j$
are distance 1 apart at time $T$. 

Now consider the distribution of $[S_3(\tau_j^{+}) - S_3(\tau_j)] \ind{j < J}$ conditional on $\mathcal{F}(T)$.
By design, this conditional distribution is the displacement of a SRW on $\Z$ from time 0 to time $\tau_j^{+}-\tau_j$.
Hence,
\begin{align*}
\E{[S_3(\tau_j^{+}) - S_3(\tau_j)] \ind{j < J}\mid \mathcal{F}(T)} &= 0 \quad \text{and} \\
\E{|S_3(\tau_j^{+}) - S_3(\tau_j)|^2 \ind{j < J} \mid \mathcal{F}(T)} &= (\tau_j^{+} - \tau_j)\ind{j < J}.
\end{align*}
Similarly, $[S_3(T)-S_3(\tau_J)]\ind{T > \tau_J}$ has mean 0 and variance $(T-\tau_J)\ind{T > \tau_J}$
conditional on $\mathcal{F}(T)$.

In order to calculate $\E{|S_3(T)-S_2(T)|^2}$ we first condition over
$\mathcal{F}(T)$, use the representation (\ref{eqn:displacement}), and use the observations above
about the conditional expectations given $\mathcal{F}(T)$ to conclude that
\begin{align} \label{eqn:squaredbound}
\E{|S_3(T)-S_2(T)|^2} &= \E{\sum_{j< J} (\tau_j^{+}-\tau_j) + (T-\tau_J)\ind{T > \tau_J}}\\
\nonumber &+ \E{\left (\sum_{j < J} S_2(\tau_j^{+})-S_2(\tau_j) + (S_2(T)-S_2(\tau_J))\ind{T > \tau_J} \right)^2}.
\end{align}
In the following two lemmas we bound the first and second expectations on the r.h.s.~of (\ref{eqn:squaredbound}).

\begin{lemma} \label{lem:firstexpectation}
$\E{\sum_{j< J} (\tau_j^{+}-\tau_j) + (T-\tau_J)\ind{T > \tau_J}} \leq 10 \sqrt{T}$.
\end{lemma}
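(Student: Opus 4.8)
The plan is to read the left-hand side as the total time that particles $i$ and $j$ spend adjacent during $[0,T]$. Since the gap $|S_1(t)-S_2(t)|\ge 1$ always and equals $1$ exactly on $\mathcal{C}=\cup_j[\tau_j,\tau_j^{+})$, the quantity $\sum_{j<J}(\tau_j^{+}-\tau_j)+(T-\tau_J)\ind{T>\tau_J}$ is precisely $\mathrm{Leb}(\mathcal{C}\cap[0,T])$. The obstacle in estimating its expectation directly is that the indicator $\ind{j<J}=\ind{\tau_j^{+}\le T}$ depends on the sojourn length $\tau_j^{+}-\tau_j$ itself, so I cannot simply factor the expectation. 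I would sidestep this by replacing the forward-looking indicator with the $\mathcal{F}(\tau_j)$-measurable event $\{\tau_j\le T\}$. Indeed, one checks termwise that
\[
\sum_{j<J}(\tau_j^{+}-\tau_j)+(T-\tau_J)\ind{T>\tau_J}\;\le\;\sum_{j\ge 1}(\tau_j^{+}-\tau_j)\,\ind{\tau_j\le T},
\]
because for $j<J$ one has $\tau_j^{+}\le T$, while the truncated final term $(T-\tau_J)\ind{T>\tau_J}$ is dominated by the full sojourn $(\tau_J^{+}-\tau_J)\ind{\tau_J\le T}$ (here $\tau_J^{+}>T$), and every later sojourn starts after $T$.

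Next I would compute the expectation of each term on the right. The pair $(S_1,S_2)$ is a strong Markov process for the filtration $(\mathcal{F}(t))$, and $\tau_j$ is a stopping time with $\{\tau_j\le T\}\in\mathcal{F}(\tau_j)$. Conditionally on $\mathcal{F}(\tau_j)$ the two particles sit at distance $1$, and the gap leaves the value $1$ only when one of the two outer edges fires; these fire at total rate $1$, so by the memorylessness of the driving Poisson processes the sojourn length $\tau_j^{+}-\tau_j$ is $\mathrm{Exponential}(1)$ and independent of $\mathcal{F}(\tau_j)$. Equivalently, in the reduced walk $Z$ of Lemma~\ref{lem:returns} the holding time at the origin is $\mathrm{Exponential}(1)$, since the only state-changing transition out of $0$ has rate $1$. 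Hence $\E{\tau_j^{+}-\tau_j\mid\mathcal{F}(\tau_j)}=1$, and conditioning on $\mathcal{F}(\tau_j)$ gives
\[
\E{(\tau_j^{+}-\tau_j)\,\ind{\tau_j\le T}}=\E{\ind{\tau_j\le T}\,\E{\tau_j^{+}-\tau_j\mid\mathcal{F}(\tau_j)}}=\pr{\tau_j\le T}.
\]

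Finally I would sum over $j$ and invoke the count. Combining the displayed domination with the previous identity,
\[
\E{\sum_{j<J}(\tau_j^{+}-\tau_j)+(T-\tau_J)\ind{T>\tau_J}}\le\sum_{j\ge1}\pr{\tau_j\le T}=\E{\#\{j\ge1:\tau_j\le T\}}.
\]
Since $\tau_j\le\tau_j^{+}\le T$ for all $j<J$ while $\tau_j>T$ for all $j>J$, at most one sojourn beyond the first $J-1$ can have begun by time $T$, so $\#\{j:\tau_j\le T\}\le J$ and the right-hand side is at most $\E{J}\le 10\sqrt{T}$ by Lemma~\ref{lem:returns}; this is exactly the claimed bound. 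The only genuinely delicate points are the termwise domination that converts the future-dependent $\ind{j<J}$ into the adapted $\ind{\tau_j\le T}$, and the verification via the strong Markov property that each sojourn at gap $1$ has mean length $1$; everything else is bookkeeping.
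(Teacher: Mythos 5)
Your proof is correct, and it reaches the same bound by a mildly different route than the paper. The paper defines the martingale $M_n=\sum_{j\le n}(\tau_j^{+}-\tau_j-1)$ with respect to $(\mathcal{F}(\tau_n^{+}))$, notes that the sojourn lengths are i.i.d.\ Exponential$(1)$ by the strong Markov property, bounds the quantity of interest by $M_J+J$, and applies the Optional Stopping Theorem to get $\E{M_J}=0$, so that everything reduces to $\E{J}\le 10\sqrt{T}$ from Lemma \ref{lem:returns}. You instead replace the future-dependent indicator $\ind{j<J}$ by the $\mathcal{F}(\tau_j)$-measurable indicator $\ind{\tau_j\le T}$ via a termwise domination (which is valid: for $j<J$ one has $\tau_j^{+}\le T$, the truncated last sojourn is dominated by the full one, and $\tau_j>T$ for $j>J$), then condition on $\mathcal{F}(\tau_j)$ term by term to get $\E{(\tau_j^{+}-\tau_j)\ind{\tau_j\le T}}=\pr{\tau_j\le T}$, and sum to obtain $\E{\#\{j:\tau_j\le T\}}\le\E{J}$. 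This is essentially the standard proof of Wald's identity unwound by hand; it buys you freedom from checking the hypotheses of the Optional Stopping Theorem for the randomly stopped sum (which the paper invokes somewhat tersely, justified by $\E{J}<\infty$ and the i.i.d.\ integrable increments), at the cost of the careful indicator bookkeeping. Both arguments rest on the same two inputs: the sojourn at gap $1$ is Exponential$(1)$ because only the two outer edges, each firing at rate $1/2$, can separate the adjacent pair, and $\E{J}\le 10\sqrt{T}$. Your verification of both inputs is sound.
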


\begin{proof}
Define
$$M_n = \sum_{j=1}^n (\tau_j^{+}-\tau_j - 1).$$
We make use of the strong Markov property of the process
$\vec{S} = \big ((S_1(t),S_2(t)); t \geq 0\big)$.
By the strong Markov property the $\tau_j^{+}-\tau_j$ are independent and
identically distributed because $\tau_j^{+}-\tau_j$ is a stopping time for
$(\vec{S}(t+ \tau_j)-\vec{S}(\tau_j))$. Moreover, $\tau^{+}_1-\tau_1$ is distributed as an
exponential random variable of rate 1. Indeed, its distribution is the first time particles
$0$ and $1$ from the interchange process on $\Z$ move farther than distance 1 apart,
which is the minimum of two independent exponential random variables of rate $1/2$.
These observations imply that $\{M_n\}$ is a mean zero martingale with respect to the
filtration $(\mathcal{F}(\tau^{+}_n); n \geq 0)$. The random variable $J$ is a stopping time
for this filtration since the event
$\{J \geq j\} \in \mathcal{F}(\tau_j) \subset \mathcal{F}(\tau^{+}_j)$. Assume that $T \geq 1$.
As $\E{J} \leq 10 \sqrt{T}$ by Lemma \ref{lem:returns}, the Optional Stopping Theorem implies $\E{M_J} = 0$.
As $(T-\tau_J)\ind{T > \tau_J} \leq \tau_J^{+} - \tau_J$, we have
$$\sum_{j < J} (\tau_j^{+}-\tau_j) + (T-\tau_J)\ind{T > \tau_J} \leq M_J + J.$$
Therefore,
$$\E{\sum_{j < J} (\tau_j^{+}-\tau_j) + (T-\tau_J)\ind{T > \tau_J}} \leq \E{J} \leq 10 \sqrt{T}.$$
\end{proof}

\begin{lemma} \label{lem:secondexpectation}
$$ \E{\left (\sum_{j < J} S_2(\tau_j^{+})-S_2(\tau_j) + (S_2(T)-S_2(\tau_J))\ind{T > \tau_J} \right)^2} \leq 100 \sqrt{T}.$$
\end{lemma}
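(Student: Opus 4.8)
The plan is to recognize the random sum as the terminal value of a discrete‑time martingale and to control its second moment through the increments. Write $W$ for the quantity inside the expectation, and for each $j \geq 1$ set
$$ \tilde{D}_j = \big( S_2(\tau_j^{+} \wedge T) - S_2(\tau_j) \big)\, \ind{\tau_j \leq T}, $$
so that $W = \sum_{j \geq 1} \tilde{D}_j$. Indeed, for $j < J$ the indicator equals $1$ and $\tau_j^{+}\wedge T = \tau_j^{+}$, recovering $S_2(\tau_j^{+})-S_2(\tau_j)$; for $j=J$ it produces the partial term $(S_2(T)-S_2(\tau_J))\ind{T>\tau_J}$ (up to the null event $\{\tau_J=T\}$); and for $j>J$ one has $\tau_j>T$ so the indicator vanishes. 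Since $S_2$ is a martingale for $(\mathcal{F}(t);t\ge0)$ and $\tau_j$, $\tau_j^{+}\wedge T$ are stopping times with $\tau_j^{+}-\tau_j$ integrable (it is $\mathrm{Exp}(1)$-distributed), optional stopping on $[\tau_j,\tau_j^{+}\wedge T]$ together with the $\mathcal{F}(\tau_j)$-measurability of $\ind{\tau_j\le T}$ gives $\E{\tilde D_j \mid \mathcal{F}(\tau_j)} = 0$.

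First I would discard the cross terms. For $j<k$ the variable $\tilde D_j$ is $\mathcal{F}(\tau_j^{+}\wedge T)$-measurable, hence $\mathcal{F}(\tau_k)$-measurable because $\tau_j^{+}<\tau_{j+1}\le\tau_k$; combined with $\E{\tilde D_k\mid\mathcal{F}(\tau_k)}=0$ this yields $\E{\tilde D_j\tilde D_k}=\E{\tilde D_j\,\E{\tilde D_k\mid\mathcal{F}(\tau_k)}}=0$, so $\E{W^2}=\sum_{j\ge1}\E{\tilde D_j^2}$. Next I would bound each diagonal term by the expected number of jumps of $S_2$ inside the $j$-th contact interval. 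Writing $[S_2]_t$ for the number of jumps of $S_2$ up to time $t$, the process $S_2(t)^2-[S_2]_t$ is a martingale, and optional stopping on $[\tau_j,\tau_j^{+}\wedge T]$ gives
$$ \E{\tilde D_j^2 \mid \mathcal{F}(\tau_j)} = \ind{\tau_j\le T}\,\E{[S_2]_{\tau_j^{+}\wedge T}-[S_2]_{\tau_j}\mid\mathcal{F}(\tau_j)} \le \ind{\tau_j\le T}\,\E{[S_2]_{\tau_j^{+}}-[S_2]_{\tau_j}\mid\mathcal{F}(\tau_j)}. $$

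It then remains to bound the conditional expected number of $S_2$-jumps during a full contact interval. By the strong Markov property and translation invariance on $\Z$ this is a universal constant, independent of $j$ and of the past: while $|S_1-S_2|=1$ the middle edge fires at rate $1/2$, producing a swap that moves $S_2$ yet keeps the particles at distance $1$, whereas the two outer edges fire at total rate $1$ and terminate the interval, moving $S_2$ only when the edge adjacent to $S_2$ is the one that fires. A short computation gives this expected count to be $1$; in any case it is at most a universal constant $c$. Hence $\E{\tilde D_j^2}\le c\,\pr{\tau_j\le T}$, and using $\{j:\tau_j\le T\}\subseteq\{1,\dots,J\}$,
$$ \E{W^2} \le c\sum_{j\ge1}\pr{\tau_j\le T} = c\,\E{\#\{j\ge1:\tau_j\le T\}} \le c\,\E{J} \le 100\sqrt{T}, $$
where the last step invokes $\E{J}\le 10\sqrt{T}$ from Lemma \ref{lem:returns}.

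The main obstacle is the middle step above: establishing that the expected number of $S_2$-jumps in a single contact interval is bounded by a universal constant. This requires a careful reading of the local dynamics when the two particles sit at distance $1$ — separating swaps (which preserve the distance and displace $S_2$) from the two ways the interval can end (only one of which displaces $S_2$) — and the observation that, by the strong Markov property, this local description is identical at every interval regardless of positions and history. A secondary technical point is the rigor of the optional-stopping identities, which needs the $S_2$-increments over $[\tau_j,\tau_j^{+}]$ to be square-integrable; this follows from the exponential tails of $\tau_j^{+}-\tau_j$ and the bounded jump rate of $S_2$, so that the stopped martingales converge in $\mathrm{L}^2$.
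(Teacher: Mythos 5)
Your argument is correct and follows the same overall strategy as the paper's: decompose the quantity into increments of $S_2$ over the contact intervals, kill the cross terms by a martingale/orthogonality argument, and reduce the diagonal to $\E{J}\le 10\sqrt{T}$ from Lemma \ref{lem:returns}. Two differences are cosmetic: you absorb the final partial term by truncating the $J$-th increment at $T$, whereas the paper splits it off with $(a+b)^2\le 2(a^2+b^2)$ and bounds it by $1$; and you prove orthogonality of the $\tilde D_j$ directly, whereas the paper packages the same fact as the martingale $M'_n$ and applies optional stopping at $J-1$. The one substantive divergence is the diagonal term. You bound $\E{\tilde D_j^2}$ by the conditional expected number of jumps of $S_2$ during a contact interval, via the martingale $S_2(t)^2$ minus its jump count, and you correctly flag the computation of that expected jump count as the delicate step. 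The paper sidesteps this entirely with a deterministic observation: throughout $[\tau_j,\tau_j^{+})$ the two particles occupy a fixed pair of adjacent sites (a firing of the middle edge merely swaps them within that pair), so $|S_2(t)-S_2(\tau_j)|\le 2$ for every $t\in[\tau_j,\tau_j^{+}]$, giving $\tilde D_j^2\le 4$ outright on $\{\tau_j\le T\}$. That same boundedness also disposes of your secondary concern about square-integrability in the optional-stopping identities, with no need to invoke exponential tails of $\tau_j^{+}-\tau_j$. Your rate computation (expected jump count $1$ per interval: $1/2$ from swaps at rate $1/2$ against termination at rate $1$, plus $1/2$ from the terminating firing being adjacent to $S_2$) is right and even yields a slightly better constant, so your ``main obstacle'' is surmountable --- it is just unnecessary.
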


\begin{proof}
Note that $|S_2(T)-S_2(\tau_J)|\ind{T > \tau_J} \leq 1$. From the inequality $(a+b)^2 \leq 2(a^2 + b^2)$
applied with $b = S_2(T)-S_2(\tau_J)\,\ind{T > \tau_J}$ we deduce that
\begin{equation} \label{eqn:secondterm}
\left (\sum_{j < J} S_2(\tau_j^{+})-S_2(\tau_j) + (S_2(T)-S_2(\tau_J))\ind{T > \tau_J} \right)^2
\leq 2 \left (\sum_{j < J} S_2(\tau_j^{+})-S_2(\tau_j) \right )^2 + 2.\end{equation}

We claim that $\left \{\sum_{j \leq n} (S_2(\tau_j^{+})-S_2(\tau_j)) \right \}_{n \geq 1}$
is a mean zero Martingale adapted to the filtration $\{\mathcal{F}(\tau_n^{+})\}$ with the
increment $S_2(\tau_j^{+})-S_2(\tau_j)$ being independent of $\mathcal{F}(\tau_{j-1}^{+})$.
To see this note that the strong Markov property of the process $\vec{S} = (S_1,S_2)$ implies that
$S_2(\tau_j^{+})-S_2(\tau_j)$ is independent of $\mathcal{F}(\tau_{j-1}^{+})$.
The Optional Stopping Theorem gives $\E{S_2(\tau_j^{+})-S_2(\tau_j)} = 0$ because
$\tau_j^{+}-\tau_j$ is a stopping time for the process $(\vec{S}(t+ \tau_j)-\vec{S}(\tau_j); t \geq 0)$
and it has finite expectation. Finally, observe that $|S_2(\tau_j^{+})-S_2(\tau_j)| \leq 2$.
Indeed, suppose two particles $a$ and $b$ are distance 1 apart on the interchange process
during the time interval $[t_1,t_2)$. If they drift farther apart at time $t_2$ then the absolute
displacement of either $a$ or $b$ from its respective position at time $t_1$ to its position at time
$t_2$ is at most 2. These three observations imply the claim. In particular,
$$M'_n = \Big (\sum_{j \leq n} S_2(\tau_j^{+})-S_2(\tau_j) \Big)^2 -\sum_{j \leq n} \E{(S_2(\tau_j^{+})-S_2(\tau_j))^2}$$
is also a Martingale.  The Optional Stopping Theorem now implies $\E{M'_{J-1}} = 0$.
Combining this with the estimate (\ref{eqn:secondterm}) we get that
\begin{align*}
\E{\left (\sum_{j < J} S_2(\tau_j^{+})-S_2(\tau_j) + (S_2(T)-S_2(\tau_J))\ind{T > \tau_J} \right)^2} &\leq \\
2 \E{ \Big (\sum_{j < J} S_2(\tau_j^{+})-S_2(\tau_j) \Big )^2} + 2 &= \\
2\,\E{\sum_{j < J} \big (S_2(\tau_j^{+})-S_2(\tau_j) \big )^2} + 2 &\leq \\
8\, \E{J-1} +2 & \leq 100 \sqrt{T}.
\end{align*}
\end{proof}

From (\ref{eqn:squaredbound}), Lemma \ref{lem:firstexpectation} and Lemma \ref{lem:secondexpectation},
we conclude that $\E{|S_3(T)-S_2(T)|^2} \leq C \sqrt{T}$ for some universal constant $C$ so long as $T \geq 1$.
Then (\ref{eqn:longtimebound}) implies that
$$\pr{\sup_{0 \leq t \leq T} |S_3(t)-S_2(t)| > \lambda} \leq \frac{C\, \max\{T^{1/2},1\}}{\lambda^2}.$$
After applying the cover map $\pi_n$, we get that the processes $\pi_n(S_1), \pi_n(S_2)$ and $\pi_n(S_3)$
have the property that the first two have the law of the non-rescaled trajectories of particles $i$ and $j$
in $\Int$ while the third has the law of the non-rescaled trajectory of particle $j$ and is independent of the first.
As $\pi_n$ is Lipschitz we get that
\begin{equation} \label{eqn:finalestimate}
\pr{ \sup_{0 \leq t \leq n^2T} |\pi_n(S_2(t))-\pi_n(S_3(t))| > \lambda} \leq
\frac{C\, \max\{T^{1/2},1\}\,n}{\lambda^2}.\end{equation}
Setting $T_k(t) = \pi_n(S_k(t))/n$ for $k = 1,2,3$ and $\lambda = n^{3/4}$
in (\ref{eqn:finalestimate}) we get the conclusion of Lemma \ref{lem:concentration}.

\begin{lemma}[SRW visits to origin] \label{lem:SRWvisits}
Let $S = (S(t); t \geq 0)$ be continuous time SRW on $\Z$ where all edges fire at rate 1
and $S(0)= x$ for some fixed $x \in \Z$. Let $V(T)$ be the number of visits of $S$ to the origin
in the time period $[0,T]$. Then $\E{V(T)} \leq 3 \sqrt{T}$. In particular, the
expected number of visits to a set of vertices $\{y_1, \ldots, y_j\}$ within the time interval $[0,T]$
is at most $3j \sqrt{T}$.
\end{lemma}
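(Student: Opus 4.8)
The plan is to subordinate the continuous-time walk to its embedded discrete chain. Since each of the two edges incident to a vertex fires at rate $1$, the walk jumps at total rate $2$, so I would write $S(t) = x + Z_{N(t)}$, where $(Z_k)_{k \ge 0}$ is a discrete-time SRW on $\Z$ started at $0$ and $(N(t); t \ge 0)$ is an independent Poisson process of rate $2$. Because consecutive values of the embedded chain differ by $\pm 1$, the walk can never occupy the origin at two successive steps, so a visit to the origin in $[0,T]$ is exactly an index $k \in \{0,1,\ldots,N(T)\}$ with $x + Z_k = 0$. Hence $V(T) = \sum_{k=0}^{N(T)} \ind{Z_k = -x}$, and taking expectations while using the independence of the clock from the chain gives
\[
\E{V(T)} = \sum_{k \ge 0} \pr{N(T) \ge k}\, \pr{Z_k = -x}.
\]

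Next I would bound the return probabilities by a local central limit estimate. Only indices $k$ of the same parity as $x$ contribute, and for such $k = 2\ell$ (the odd case being identical after a shift of the index) one has $\pr{Z_{2\ell} = -x} \le \binom{2\ell}{\ell} 4^{-\ell} \le (\pi \ell)^{-1/2}$ for $\ell \ge 1$, the central binomial coefficient being the largest mass. The essential point, and the source of the sharp constant, is that restricting to a single parity halves the naive sum; keeping both parities would double the final constant. The $k=0$ term contributes at most $1$, and only when $x=0$.

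Finally I would control the random number of steps. Interchanging sum and expectation (Tonelli),
\[
\sum_{\ell \ge 1} \pr{N(T) \ge 2\ell}\, \frac{1}{\sqrt{\pi \ell}} \le \frac{1}{\sqrt{\pi}}\, \E{\sum_{\ell = 1}^{\lfloor N(T)/2 \rfloor} \ell^{-1/2}} \le \frac{2}{\sqrt{\pi}}\, \E{\sqrt{N(T)/2}} \le \frac{2}{\sqrt{\pi}}\sqrt{T},
\]
using $\sum_{\ell=1}^{M} \ell^{-1/2} \le 2\sqrt{M}$ and Jensen's inequality $\E{\sqrt{N(T)/2}} \le \sqrt{\E{N(T)/2}} = \sqrt{T}$ (recall $\E{N(T)} = 2T$). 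Adding the initial-position term gives $\E{V(T)} \le 1 + \tfrac{2}{\sqrt{\pi}}\sqrt{T} \le 3\sqrt{T}$ for $T \ge 1$, since $2/\sqrt{\pi} < 1.13$; this is the range used in the sequel. The final assertion then follows from linearity of expectation: the number of embedded-chain indices landing in $\{y_1,\ldots,y_j\}$ equals $\sum_{r} \#\{k \le N(T): Z_k = y_r - x\}$, whose expectation is at most $\sum_r 3\sqrt{T} = 3j\sqrt{T}$.

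The main obstacle I anticipate is pinning down the constant rather than the order of magnitude: one must notice that a fixed starting point activates only one parity class of return times (otherwise the constant doubles), and must pass from the deterministic-time estimate to the Poisson-time estimate through Tonelli and Jensen without losing a factor.
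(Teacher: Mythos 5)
Your argument is correct and follows essentially the same route as the paper: subordinate to the embedded discrete-time walk via an independent rate-$2$ Poisson clock, bound the return probabilities by the central binomial coefficient, and pass back to continuous time using Tonelli and Jensen. The only differences are cosmetic --- you sum $\pr{N(T)\ge k}\,\pr{Z_k=-x}$ directly where the paper conditions on $N(T)$, and your parity bookkeeping yields a slightly sharper constant ($\tfrac{2}{\sqrt{\pi}}\sqrt{T}+1$ versus the paper's $2\sqrt{2}\,\sqrt{T}$), at the harmless cost of assuming $T\ge 1$, which is the only regime in which the lemma is invoked.
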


\begin{proof}
Let $S_0 = x, S_1, S_2, \ldots$ be a discrete time SRW on $\Z$. Let $P$ be a Poisson process
of rate 2 that is independent of $S_0,S_1, \ldots$. Let $N(t)$ be the number of points of $P$
in $[0,t]$. The law of the continuous time SRW on $\Z$ with edges firing at rate $1$ is $(S_{N(t)}; t \geq 0)$.
Let $V_n$ be the number of visits of $S_0, S_1, \ldots$ to the origin from time 0 to time $n$.
Then $V(T)$ equals $V_{N(T)}$ in distribution. If we show that $\E{V_n} \leq 2 \sqrt{n}$, then $N(T)$
being independent of $V_0, V_1, \ldots$ implies that
$$\E{V(T)} = \E {\E{ V_{N(T)} \mid N(T)}} \leq 2 \E{\sqrt{N(T)}} \leq 2 \E{N(T)}^{1/2}
= 2\sqrt{2} \, \sqrt{T}, \;\text{as required}.$$

Note that $\E{V_n} = \sum_{k=0}^n \pr{S_k = 0}$. We may assume that $x \geq 0$ due to symmetry.
$\pr{S_k=0}$ is non-zero only if $k$ and $x$ have the same parity and $k \geq x$. Then,
$$\pr{S_k = 0} = \binom{k}{(k+x)/2} 2^{-k} \leq \binom{k}{\lfloor \frac{k}{2} \rfloor}2^{-k} \leq \frac{1}{\sqrt{k}}.$$
The last estimate follows from Stirling's approximation. Therefore,
$$\E{V_n} \leq 1 + \sum_{k=1}^n \frac{1}{\sqrt{k}} \leq 2\sqrt{n}\,.$$
The last statement of the lemma follows from observing that the number of visits to a vertex $y$ of a
SRW started from $x$ has the same law as the number of visits to the origin of a SRW started from $x-y$.
\end{proof}

\section{The hydrodynamic limit} \label{sec:hydrodynamic}

The symmetric simple exclusion process (SSEP) is a particle system on $P_n$ evolving as follows \cite[chapter 2.2]{KL}.
There are two types of particles - black particles, or occupied sites, and white particles, also called free sites.
The initial configuration of black particles is some subset $\eta_0 \in \{0,1\}^{V(P_n)}$, where $\eta_0(i) =1$
indicates that there is a black particle at vertex $i$. The particles then swap positions according to the dynamics of the
interchange process. If $\eta_t$ is the configuration of black particles at time $t$ then SSEP is the
$\{0,1\}^{V(P_n)}$-valued process $(\eta_t; t \geq 0)$.
Let $|\eta_0| = \sum_i \eta_0(i)$ be the total number of black particles in the SSEP.
The empirical measure of SSEP at time $t$ is the following random probability measure on $[0,1]$:
\begin{equation} \label{eqn:SSEPdensity}
\rho_n(t) = \frac{1}{|\eta_0|} \,\sum_{i=1}^n \delta_{\{\frac{i}{n}\}} \cdot \eta_{n^2t}(i).\end{equation}

The measure $\rho_n(t)$ is related to the particle trajectories of the interchange process.
Let $T^n_j$ be the rescaled trajectory of particle $j$ in $\Int$. The vertices occupied by the black
particles in the SSEP at time $n^2t$ are $\{ n \cdot T^n_j(t)\}_j$ for only those values
of $j$ such that $\eta_0(j) = 1$. Therefore,
$$\rho_n(t) = \frac{1}{|\eta_0|} \,\sum_{i=1}^n \delta_{T^n_j(t)} \cdot \eta_0(j).$$

Let $J_n \sim \uni{\{j: \eta_0(j) =1\}}$ denote a random choice of black particle.
Conditional on $\Int$ the measure $\rho_n(t)$ is the law of $T^n_{J_n}(t)$ over the random $J_n$.
The measure valued process $(\rho_n(t); t \geq 0)$ is the law of the trajectory of a random
black particle in $\Int$. The argument of the previous sections implies that $(\rho_n(t); t \geq 0)$
has the same weak limit as the limiting law of the process $\Big (\pi( \frac{J_n + S(n^2t)}{n}); t \geq 0 \Big)$
considered as a random function in $D(\R_{+})$. (Recall that $(S(t); t \geq 0)$ is SRW on $\Z$
that is independent of all the $J_n$s, and $\pi$ is the covering map from $\R$ onto $[0,1]$.)

Suppose that the law of $J_n/n$, which is $\rho(0)$ rescaled onto $[0,1]$, converges
weakly to the law of a random variable $X_0 \in [0,1]$. As each $J_n$ is independent
of the SRW $S$, the law of the process
$$\left (\pi \left ( \frac{J_n + S(n^2t)}{n} \right); t \geq 0 \right) \to
\Big (\pi \big (X_0 + B(t) \big); t \geq 0 \Big) \;\;\text{weakly in}\;\;D(\R_{+}),$$
where $X_0$ is independent of standard Brownian motion $B$. The latter process
has the law of standard Brownian motion started from $X_0$ and then reflected off
of the lines $y = 0$ and $y=1$.

Consequently, for any $t$, the random measure $\rho_n(t)$ converges weakly to the deterministic
measure induced by the law of $\pi \big (X_0 + B(t) \big)$. This is the hydrodynamic
limit of SSEP, describing the limiting empirical density of the black particles \cite[chapter 4]{KL}.

\section*{Acknowledgements}
M.~Rahman's research was partially supported by an NSERC PDF grant.
B.~Vir\'{a}g's research was partially supported by the NSERC Discovery Accelerator Supplements Program.

\bibliographystyle{habbrv}

\bibliography{refs.bib}

\bigskip

\noindent \small{Mustazee Rahman, Department of Mathematics, Massachusetts Institute of Technology, Cambridge, MA 02139}\\
\noindent E-mail: \texttt{mustazee@mit.edu} \quad Web: \url{www.math.mit.edu/~mustazee}

\vskip 0.05in

\noindent \small{B\'{a}lint Vir\'{a}g, Department of Mathematics, University of Toronto, Toronto, ON Canada M5S 2E4.}\\
\noindent E-mail: \texttt{balint@math.toronto.edu} \quad Web: \url{www.math.toronto.edu/balint}

\end{document}